\def\R{\mathbb{R}}
\providecommand{\tabularnewline}{\\}
\newtheorem{theorem}{Theorem}[section]
\newtheorem{corollary}{Corollary}[section]
\newtheorem{lemma}[theorem]{Lemma}
\newtheorem{proposition}{Proposition}[section]
\theoremstyle{definition}
\newtheorem{remark}{Remark}[section]
\newtheorem{example}{Example}[section]
\DeclareMathAlphabet{\mathcal}{OMS}{cmsy}{m}{n}
\DeclareSymbolFont{epsilon}{OML}{ntxmi}{m}{it}
\DeclareMathSymbol{\epsilon}{\mathord}{epsilon}{"0F}
\newcommand{\abs}[1]{\left|#1\right|}
\title[Solving Strictly Quasiconvex Multiobjective Programming Problems] 
      {A Monotonic Optimization Approach for Solving Strictly Quasiconvex
Multiobjective Programming Problems}
\author[Tran Ngoc Thang and Dao Tuan Anh]{}
\subjclass{Primary: 90C29; Secondary: 90C26.}
 \keywords{Multiobjective programming,  monotonic optimization, strictly quasiconvex, outcome space, outer approximation.}
 \email{thang.tranngoc@hust.edu.vn}
 \email{dta.hust@gmail.com}
\thanks{}
\begin{document}
\maketitle

\centerline{\scshape Tran Ngoc Thang${}^1\footnotetext[1]{Corresponding author: thang.tranngoc@hust.edu.vn}$ and Dao Tuan Anh}
\medskip
{\footnotesize
 \centerline{ School of Applied Mathematics and Informatics}
   \centerline{Hanoi University of Science and Technology}
   \centerline{No. 1 Dai Co Viet, Hai Ba Trung, Hanoi, Vietnam}
}

\bigskip


\begin{abstract}
In this article, we use the monotonic optimization approach to propose
an outcome-space outer approximation by copolyblocks for solving strictly
quasiconvex multiobjective programming problems and especially in
the case that the objective functions are nonlinear fractional. After
the algorithm is terminated, with any given tolerance, we obtain an
approximation of the weakly efficient solution set, that contains
the whole weakly efficient solution set of the problem. The algorithm
is proved to be convergent and it is suitable to be implemented in parallel
using standard convex programming tools. Some computational experiments
are reported to show the accuracy and efficiency of the proposed algorithm.
\end{abstract}

\section{Introduction}

We consider the following strictly quasiconvex multiobjective programming problem 
\begin{align}
{\rm Vmin}\; & \;f(x)\tag*{\ensuremath{(QVP)}}\label{QVP}\\
\mbox{s.t.}\; & \;x\in\mathcal{S},\nonumber 
\end{align}
where the feasible solution set $\mathcal{S}\subset\R^{n}$, $n\in \mathbb{N}^*$ is a nonempty,
convex, compact set and the objective function $f:\R^{n}\rightarrow\R^{p}$, $2 \leq p\in \mathbb{N}^*$
is a strictly quasiconvex vector function on $\mathcal{S}$, i.e.
$f_{i},i=1,\dots,p$ are strictly quasiconvex functions on $\mathcal{S}$.
Recall that a continuous function $h:\mathcal{S}\rightarrow\R$ is
called \emph{strictly quasiconvex} if 
\[
h(x^{1})<h(x^{2})\Rightarrow h(\lambda x^{1}+(1-\lambda)x^{2})<h(x^{2}),
\]
for every $x^{1},x^{2}\in\mathcal{S}$ and $0<\lambda<1$ (see \cite{Avriel1998},
\cite{Mangasarian1969}). For two vectors $a,b\in\R^{r}$ with some
integer $r\geq2$, we denote $a\leq b$ if $a_{i}\leq b_{i}$ for
all $i=1,\dots,r$. We also write $a<b$ when $a_{i}<b_{i}$ for all
$i=1,\dots,r$. For any $a,b\in\R^{r}$ with $a\leq b$, the box $[a,b]$
is defined by the set of all $z\in\R^{r}$ such that $a\leq z\leq b.$
A feasible solution $\bar{x}$ is said to be an efficient solution
(resp., weakly efficient solution) of \ref{QVP} if there is
no solution $x\in\mathcal{S}$ such that $f(x)\leq f(\bar{x})$ and
$f(\bar{x})\not=f(x)$ (resp., $f(x)<f(\bar{x})$). 

In practical computation, finding the exact efficient solution set of problem \ref{QVP}
is very difficult, even impossible, even when \ref{QVP} is a linear
multiobjective programming problem \cite{Benson2005}. Therefore,
different methods to approximate the efficient solution set have been increasingly concerned (see \cite{Das1998}, \cite{Ehrgott2011}, \cite{GourionLuc2010},
\cite{Klamroth2002}, \cite{Lohne2014}, \cite{Ruzika2005},...).
Namely, given a vector $\varepsilon\in\R_{+}^{p}$, $\bar{x}$ is
said to be a weakly $\varepsilon$-efficient solution of $\ref{QVP}$
if there is no solution $x\in\mathcal{S}$ such that $f(\bar{x})-\varepsilon>f(x)$.
The sets of all efficient solutions, weakly efficient and weakly $\varepsilon$-efficient
solutions of $\ref{QVP}$ are respectively denoted by $\mathcal{S}_{E}$
and $\mathcal{S}_{WE}$ and $\mathcal{S}_{\varepsilon}$.

Denote the positive orthant of $\R^{p}$ by $\R_{+}^{p}$ and its
interior by ${\rm int}\R_{+}^{p}$. Let $Q\subset\mathbb{R}^{p}$
be a nonempty set. We denote by ${\rm Min}Q,{\rm WMin}Q$ and $Q_{\varepsilon}$
the sets of nondominated points, weakly nondominated points and weakly
$\varepsilon$-nondominated points of $Q$, respectively. Namely,
\begin{eqnarray*}
{\rm Min}Q & = & \{q^{0}\in Q\mid(q^{0}-\mathbb{R}_{+}^{p})\cap Q=\{q^{0}\}\},\\
{\rm WMin}Q & = & \{q^{0}\in Q\mid(q^{0}-{\rm int}\mathbb{R}_{+}^{p})\cap Q=\emptyset\},\\
Q_{\varepsilon} & = & \{q^{0}\in Q\mid(q^{0}-\varepsilon-{\rm int}\mathbb{R}_{+}^{p})\cap Q=\emptyset\}.
\end{eqnarray*}

\noindent We denote $\text{\ensuremath{\mathcal{Y}}}:=f(\mathcal{S})=\{y\in\R^p\mid\exists x\in \R^n,f(x)=y\}$
the outcome set or the value set of problem $\ref{QVP}$. With above
notations, the sets ${\rm Min}\mathcal{Y},{\rm WMin}\mathcal{Y}$
and $\mathcal{Y}_{\varepsilon}$ are the efficient, weakly efficient
and weakly $\varepsilon$-efficient outcome sets of $\ref{QVP}$,
respectively. They also are the images of $\mathcal{S}_{E},\mathcal{S}_{WE}$
and $\mathcal{S}_{\varepsilon}$ under $f$, respectively.

Recall that a vector function $f=(f_{1},\dots,f_{p}),f_{i}:\mathcal{S}\rightarrow\R$
is usually said to be \emph{convex} (resp., \emph{strictly quasiconvex}) if the
component functions $f_{i},i=1,\dots,p$ are convex (resp., strictly
quasiconvex) on $\mathcal{S}$ (see \cite{Benoist2001b}, \cite{Luc2005a}). It is easily seen that if $f$ is
convex then $f$ is strictly quasiconvex. Therefore, a \emph{convex
multiobjective programming problem} is just a special case of $\ref{QVP}$. 

As we know, there are many economic, financial or technical indicators
which are presented by ratios or fractional functions. The objective
functions, for instance, are maximization of output to input, return
to risk, profit to cost, or the rate of growth... (see \cite{Benson2005}, \cite{Konno1989},...). Consider two continuous functions $h,g$
on a nonempty convex set $S\subseteq\R^{n}.$ The fractional function
$h/g$ is strictly quasiconvex if $h$ is non-negative convex and
$g$ is positive concave on $S,$ or both $h$ and $g$ are affine
(for more other forms of the strictly quasiconvex fractional function,
see in \cite{Avriel1998}). By this assertion, we find that a \emph{multiobjective
concave fractional program} \cite{Benson2005} and a \emph{multiobjective
linear fractional program} \cite{Benson1998} are special cases of
$\ref{QVP}$. 

Several authors have studied the structure of the efficient value
set of $\ref{QVP}$. In this case, ${\rm Min}\mathcal{Y}$ is connected
(see \cite{Benoist2001b}, \cite{Luc2005a}) but is not closed even when $f$ is linear
fractional \cite{Stancu-Minasian97}. However, the weakly efficient
set ${\rm WMin}\mathcal{Y}$ is proved to be closed and connected.
Therefore, we establish outcome-space algorithm for approximating
the weakly efficient set ${\rm WMin}\mathcal{Y}$ instead of ${\rm Min}\mathcal{Y}$.
As usual, we consider the equivalently efficient set $\mathcal{Y}^{+}=\mathcal{Y}+\R_{+}^{p}$
which is full-dimensional and satisfies ${\rm WMin}\mathcal{Y}^{+}\cap\mathcal{Y}={\rm WMin}\mathcal{Y}.$
In general, $\mathcal{Y}^{+}$ is nonconvex, for example, $f(x)=\sqrt{\abs{x}},S=[-1,1]$, but $\mathcal{Y}^{+}$ has some nice property that it is a conormal
set. By the monotonic analysis, a conormal set can be approximated
by a copolyblock as closely as desired (see Section \ref{section:the_cutting_cones}). Therefore,
we propose an algorithm for outer approximating the set $\mathcal{Y}^{+}$
as well as the weakly efficient set ${\rm WMin}\mathcal{Y}^{+}.$
After the algorithm is terminated, we obtain an outer and an inner approximation
set of the weakly efficient value set (this idea is also employed in several previous works, e.g., \cite{Kaliszewski2018}.) An approximation
of the weakly efficient solution set is also obtained, which contains the whole weakly
efficient solution set $\mathcal{S}_{WE}$. The algorithm can be implemented
by using standard convex programming tools.

In Section 2, we present theoretical bases and algorithms to generate
a nondominated outcome point and a weakly efficient solution of problem
$(QVP).$ In this section, we also present The cutting cones and outer
approximate outcome sets to establish the outer approximation algorithm
for solving $(QVP)$ in Section 3. The convergence of the
algorithms are proved in Section 4, and Section 5 provides the computational
experiment. Some concluding remarks are given is the last section.

\section{Theoretical preliminaries}

\subsection{Generating a nondominated outcome point and a weakly efficient solution
of $(QVP)$}

Since the objective function $f$ is continuous and $\mathcal{S}$
is bounded, the outcome set $\mathcal{Y}$ is also bounded. Now we
determine a box containing $\mathcal{Y}$. 

By the compactness of $\mathcal{Y}$, for each $i=1,\dots,p,$ the
problems of minimizing and maximizing $y_{i}$ on the feasible set
$\mathcal{Y}$ have optimal solutions. It is easy to transform these
problems into 
\begin{equation}
\min\;\;\{f_{i}(x)\mid x\in\mathcal{S}\},\tag*{\ensuremath{({\rm P}_{i}^{m})}}\label{eq:P_i}
\end{equation}
and the problem 
\begin{equation}
\max\;\;\{f_{i}(x)\mid x\in\mathcal{S}\}.\tag*{\ensuremath{({\rm P}_{i}^{M})}}\label{eq:P_i-1}
\end{equation}
To solve problem \ref{eq:P_i}, we utilize the strictly quasiconvexity
of the objective function associated with the following remark.

\begin{remark}\label{rm_quasiconvex} \emph{Any local optimal solution
of a strictly quasiconvex programming problem is also a global optimal
solution \cite{Mangasarian1969}. Therefore, the problem can be solved
by using suitable algorithms for convex programming problems \cite{Benson2004}}.\end{remark}

By Remark \ref{rm_quasiconvex}, problem \ref{eq:P_i} can be solved
by convex programming tools. Note that \ref{eq:P_i-1} is a nonconvex
problem. However, it is possible to find an upper bound of this problem
without having to solve \ref{eq:P_i-1} (see \cite{Benson1998} for details and illustration).
Namely, for each $j=1,\dots,n$, set 
\[
\alpha_{j}=\min\{x_{j}\mid x\in\mathcal{S}\}.
\]
 Let $\alpha^{0}=(\alpha_{1},\alpha_{2},\dots,\alpha_{n})$ and 
\[
U=\max\{\left\langle e,x\right\rangle \mid x\in\mathcal{S}\},
\]
where $e$ is the vector of ones. Because $\mathcal{S}$ is a compact set, $U$ is a finite number. Notice also that convex programming
tools are applicable to find $\alpha^{0}$ and $U$. For each $j=1,2,\dots,n$,
define $\alpha^{j}=(\alpha_{1}^{j},\alpha_{2}^{j},\dots,\alpha_{n}^{j})^{T}$
by 
\[
\alpha_{k}^{j}=\begin{cases}
\alpha_{k}^{0},\; & {\rm if}\ \ k\ne j\\
U-\sum_{k\not=j}\alpha_{k}^{0},\; & {\rm if}\ \ k=j.
\end{cases}
\]
\begin{figure}[H]
\includegraphics[scale=0.65]{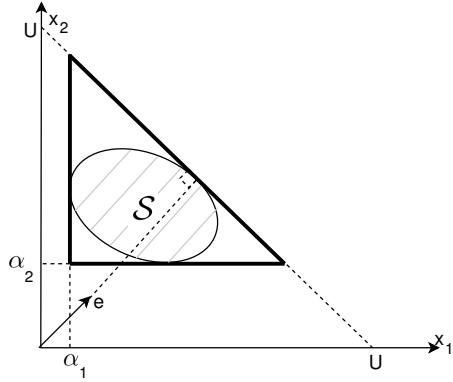}
\centering{}\caption{A 2D example of $\mathcal{S}$ and a simplex containing it}
\label{fig:simplex_example}
\end{figure}
Let $\Delta$ be a simplex with the vertex set $V(\Delta)=\{\alpha^{0},\alpha^{1},\dots,\alpha^{n}\}$.
It can be verified that $\mathcal{S}\subseteq\Delta$ (see Figure \ref{fig:simplex_example}). Therefore, 
\[
\max\{f_{i}(x)\mid x\in\mathcal{S}\}\leq\max\{f_{i}(x)\mid x\in\Delta\}.
\]
 Since $f_{i}(x),i=1,\dots,p,$ are quasiconvex and $\Delta$ is a
simplex, we have 
\[
\max\{f_{i}(x)\mid x\in\Delta\}=\max\{f_{i}(x)\mid x\in V(\Delta)\}.
\]
 For each $i=1,\dots,p$, choose a real number $M_i$ such that
\[
M_{i}=\max\{f_{i}(x)\mid x\in V(\Delta)\}.
\]
 Then 
\[
M_{i}\geq\max\{f_{i}(x)\mid x\in\mathcal{S}\}.
\]

Denote the optimal value of problem \ref{eq:P_i} by $m_{i}$, for
$i=1,\dots,p$. Let $m=(m_{1},m_{2},\dots,m_{p})$ and $M=(M_{1},M_{2},\dots,M_{p}).$
Then we get the box $[m,M]$ such that $\mathcal{Y}\subseteq[m,M].$
The point $m$ is also known as the ideal point of the outcome set.
If $m\in\mathcal{Y}$, the set ${\rm Min}\mathcal{Y}$ consists of
this point only. From now on, we assume that $m\not\in\mathcal{Y}$. 

Let
\begin{eqnarray*}
\mathcal{P}^{0} & = & [m,M]=(m+\Bbb R_{+}^{p})\cap(M-\Bbb R_{+}^{p});\\
\mathcal{Y}^{+} & = & \mathcal{Y}+\Bbb R_{+}^{p};\\
\mathcal{Y}^{\diamond} & = & \mathcal{Y}^{+}\cap(M-\Bbb R_{+}^{p}).
\end{eqnarray*}
It is clear that $\mathcal{Y}^{+}$ and $\mathcal{Y}^{\diamond}$
have interior points and $\mathcal{Y}^{\diamond}\subset\mathcal{P}^{0}$.
The following evident properties of $\mathcal{Y}^{+}$ and $\mathcal{Y}^{\diamond}$
will be used in the sequel (see \cite{Benson1998}). 

\begin{proposition} \label{lem_MinY} We have

i) $\;{\rm Min}\mathcal{Y}={\rm Min}\mathcal{Y}^{+}={\rm Min}\mathcal{Y}^{\diamond}$;

ii) ${\rm WMin}\mathcal{Y}={\rm WMin}\mathcal{Y}^{+}\cap\mathcal{Y}={\rm WMin}\mathcal{Y}^{\diamond}\cap\mathcal{Y}$.

\end{proposition}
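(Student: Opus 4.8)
The plan is to prove each of the set equalities in Proposition~\ref{lem_MinY} by showing mutual inclusion, exploiting only the elementary structure of the three sets involved: $\mathcal{Y}$ is compact, $\mathcal{Y}^{+}=\mathcal{Y}+\R_{+}^{p}$ is its ``free-disposal hull'', and $\mathcal{Y}^{\diamond}=\mathcal{Y}^{+}\cap(M-\R_{+}^{p})$ is that hull trimmed to the box $\mathcal{P}^{0}$. The key geometric observation, used repeatedly, is that every point of $\mathcal{Y}^{+}$ dominates (in the $\le$ sense) some point of $\mathcal{Y}$, and that $M\ge\max\{f_i(x)\mid x\in\mathcal{S}\}$ componentwise, so in fact every point of $\mathcal{Y}$ already lies in $M-\R_{+}^{p}$; hence $\mathcal{Y}\subseteq\mathcal{Y}^{\diamond}\subseteq\mathcal{Y}^{+}$.

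For part~(i), I would first show ${\rm Min}\,\mathcal{Y}={\rm Min}\,\mathcal{Y}^{+}$. If $q^{0}\in{\rm Min}\,\mathcal{Y}$, then $q^{0}\in\mathcal{Y}\subseteq\mathcal{Y}^{+}$; suppose $q\in\mathcal{Y}^{+}$ with $q\le q^{0}$. Write $q=y+r$ with $y\in\mathcal{Y}$, $r\in\R_{+}^{p}$, so $y\le q\le q^{0}$, which by nondominatedness of $q^{0}$ in $\mathcal{Y}$ forces $y=q^{0}$, whence $r=q-y\le 0$ and $r\ge 0$ give $r=0$, i.e. $q=q^{0}$; thus $q^{0}\in{\rm Min}\,\mathcal{Y}^{+}$. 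Conversely, if $q^{0}\in{\rm Min}\,\mathcal{Y}^{+}$, I claim $q^{0}\in\mathcal{Y}$: write $q^{0}=y+r$ with $y\in\mathcal{Y}$, $r\ge 0$; then $y\le q^{0}$ and $y\in\mathcal{Y}^{+}$, so minimality gives $y=q^{0}$, hence $q^{0}\in\mathcal{Y}$, and then a point of $\mathcal{Y}$ dominated by none in $\mathcal{Y}^{+}$ is a fortiori dominated by none in $\mathcal{Y}$. The equality ${\rm Min}\,\mathcal{Y}^{+}={\rm Min}\,\mathcal{Y}^{\diamond}$ follows from the sandwich $\mathcal{Y}\subseteq\mathcal{Y}^{\diamond}\subseteq\mathcal{Y}^{+}$ together with the fact that ${\rm Min}\,\mathcal{Y}^{+}\subseteq\mathcal{Y}\subseteq\mathcal{Y}^{\diamond}$: a minimal point of the larger set $\mathcal{Y}^{+}$ that happens to lie in the subset $\mathcal{Y}^{\diamond}$ is minimal in $\mathcal{Y}^{\diamond}$, and conversely any $q^{0}\in{\rm Min}\,\mathcal{Y}^{\diamond}$ cannot be dominated by a point of $\mathcal{Y}^{+}\setminus\mathcal{Y}^{\diamond}$ either, since such a dominating point $q\le q^{0}\le M$ would itself satisfy $q\in(M-\R_{+}^{p})\cap\mathcal{Y}^{+}=\mathcal{Y}^{\diamond}$, a contradiction.

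For part~(ii), the argument is the same in spirit but with open cones. If $\bar y\in{\rm WMin}\,\mathcal{Y}$, then $\bar y\in\mathcal{Y}$ and no $y\in\mathcal{Y}$ satisfies $y<\bar y$; if some $q\in\mathcal{Y}^{+}$ had $q<\bar y$, writing $q=y+r$, $r\ge 0$, $y\in\mathcal{Y}$, gives $y\le q<\bar y$, contradiction — so $\bar y\in{\rm WMin}\,\mathcal{Y}^{+}\cap\mathcal{Y}$. Conversely, if $\bar y\in{\rm WMin}\,\mathcal{Y}^{+}\cap\mathcal{Y}$, then no $q\in\mathcal{Y}^{+}$ has $q<\bar y$, in particular no $y\in\mathcal{Y}\subseteq\mathcal{Y}^{+}$ does, so $\bar y\in{\rm WMin}\,\mathcal{Y}$. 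The remaining equality ${\rm WMin}\,\mathcal{Y}^{+}\cap\mathcal{Y}={\rm WMin}\,\mathcal{Y}^{\diamond}\cap\mathcal{Y}$ again uses the sandwich and the observation that a point $q<\bar y\le M$ automatically lies in the box part, so being weakly minimal over $\mathcal{Y}^{\diamond}$ is the same as being weakly minimal over $\mathcal{Y}^{+}$ whenever the point in question lies in $\mathcal{Y}$ (hence below $M$).

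I do not anticipate a serious obstacle here — the proposition is called ``evident'' in the text and indeed reduces to bookkeeping with the orderings $\le$ and $<$ and the decomposition $q=y+r$. The one point that needs a little care, rather than a genuine difficulty, is verifying that a dominating point of $\mathcal{Y}^{+}$ never escapes the box $M-\R_{+}^{p}$; this is exactly where the defining inequality $M_i\ge\max\{f_i(x)\mid x\in\mathcal{S}\}$ (so that $\mathcal{Y}\subseteq M-\R_{+}^{p}$) is used, and it is what makes the passage to the bounded set $\mathcal{Y}^{\diamond}$ harmless. If one prefers, the whole proposition can also be cited directly from \cite{Benson1998}, but the self-contained argument above is short enough to include.
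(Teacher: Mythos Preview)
Your proof is correct. The paper itself does not prove this proposition at all: it simply labels the properties ``evident'' and cites \cite{Benson1998}. So there is no approach to compare against; you have supplied the self-contained argument that the paper omits, and you already note at the end that one could alternatively just cite the reference, which is exactly what the authors do.
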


Let $\hat{d}\in{\rm int}\R_{+}^{p}$, i.e. $\hat{d}>0$ be a fixed
vector and choose an arbitrary point $v\in\Bbb R^{p}$. We denote
$\ell(v)=\{v+t\hat{d}\mid t\in\Bbb R\}$ to be the line through $v$
with direction $\hat{d}$. The intersection of $\ell(v)$ and the
boundary of $\mathcal{Y}^{+}$ is determined by 
\begin{equation}
w_{v}=v+t_{v}\hat{d},\label{eq:bar_w}
\end{equation}
 where $t_{v}$ is the optimal solution of the problem 
\begin{equation}
\begin{array}{rl}
\min & t\tag*{\ensuremath{(P^{0}(v))}}\\
{\rm s.t.} & v+t\hat{d}\in\mathcal{Y}^{+},\;t\in\Bbb R.
\end{array}\label{eq:P_0_v}
\end{equation}
The following assertion shows the way to determine a weakly nondominated
outcome point.

\begin{lemma} \label{lem_bar_w} Let $v$ be an arbitrary point in $\R^{p}$.
Then there exists the unique point $w_{v}$ determined by \eqref{eq:bar_w}
and it is a weakly nondominated point of $\mathcal{Y}^{+}$.

\noindent \end{lemma}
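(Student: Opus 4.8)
The plan is to establish three things in sequence: existence of $w_v$ (equivalently, feasibility and solvability of $(P^0(v))$), uniqueness of $w_v$, and the weak nondominance of $w_v$ in $\mathcal{Y}^+$.

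For \emph{feasibility and existence}, I would first observe that $\mathcal{Y}^+ = \mathcal{Y} + \R_+^p$ contains the upper box corner: since $\mathcal{Y} \subseteq [m,M]$ and $M$ is a valid upper bound, every point of $M + \R_+^p$ lies in $\mathcal{Y}^+$. Because $\hat d > 0$, the line $\ell(v)$ eventually enters $M + \R_+^p$ for $t$ large enough, so the feasible set of $(P^0(v))$ is nonempty. It is also bounded below: if $v + t\hat d \in \mathcal{Y}^+$ then $v + t\hat d \geq y$ for some $y \in \mathcal{Y} \subseteq [m,M]$, so $v_i + t\hat d_i \geq m_i$ for each $i$, forcing $t \geq \max_i (m_i - v_i)/\hat d_i$. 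Since $\mathcal{Y}$ is compact, $\mathcal{Y}^+$ is closed, hence the feasible set of $(P^0(v))$ is a nonempty closed set bounded below in $t$; the minimum $t_v$ is attained, and $w_v := v + t_v \hat d \in \mathcal{Y}^+$.

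For \emph{uniqueness}, note that $w_v$ is determined by the single scalar $t_v$, and $t_v$ is the minimum of a well-defined optimization problem, so it is unique by definition; there is nothing further to prove here beyond observing that the feasible $t$-set is an interval (indeed $[t_v, +\infty)$, using that $\mathcal{Y}^+ + \R_+^p = \mathcal{Y}^+$ and $\hat d > 0$, so feasibility is preserved under increasing $t$), whence $t_v$ is its unique minimum.

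For \emph{weak nondominance}, I argue by contradiction: suppose $w_v \notin \mathrm{WMin}\,\mathcal{Y}^+$, i.e.\ $(w_v - \mathrm{int}\,\R_+^p) \cap \mathcal{Y}^+ \neq \emptyset$. Pick $y \in \mathcal{Y}^+$ with $y < w_v$. Since $\hat d > 0$, for a sufficiently small $\delta > 0$ we still have $y \leq w_v - \delta \hat d = v + (t_v - \delta)\hat d$ componentwise; then $v + (t_v-\delta)\hat d \in y + \R_+^p \subseteq \mathcal{Y}^+ + \R_+^p = \mathcal{Y}^+$. Thus $t_v - \delta$ is feasible for $(P^0(v))$, contradicting minimality of $t_v$. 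Hence $w_v \in \mathrm{WMin}\,\mathcal{Y}^+$.

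The only mildly delicate point—the main obstacle, such as it is—is the bookkeeping in the last step: choosing $\delta$ small enough that the strict inequality $y < w_v$ still gives the componentwise inequality $y \leq v + (t_v - \delta)\hat d$, which uses strict positivity of each $\hat d_i$ together with $y_i < (w_v)_i$; and similarly the use of the free-disposal property $\mathcal{Y}^+ + \R_+^p = \mathcal{Y}^+$ to absorb slack. Everything else is routine closedness/compactness reasoning.
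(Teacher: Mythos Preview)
Your proof is correct and in fact cleaner than the paper's. The paper proceeds differently: it first translates so that $v$ and $\mathcal{Y}^{+}$ lie in $\mathrm{int}\,\R_{+}^{p}$, then splits into the two cases $v\notin\mathcal{Y}^{+}$ (handled by citing a result of Benoist on the intersection of the ray with $\partial\mathcal{Y}^{+}$) and $v\in\mathcal{Y}^{+}$ (handled by arguing that no entire line can sit inside $\mathcal{Y}^{+}$ after translation, hence $t^{*}$ is finite); weak nondominance is then obtained by first showing $w_{v}\in\partial\mathcal{Y}^{+}$ and deriving a contradiction via a ball contained in $w'+\mathrm{int}\,\R_{+}^{p}\subset\mathcal{Y}^{+}$. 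Your argument avoids the translation, the case split, and the external citation by giving an explicit lower bound $t\ge\max_{i}(m_{i}-v_{i})/\hat d_{i}$ for feasibility, and your contradiction for weak nondominance goes straight to the optimality of $t_{v}$ (finding a feasible $t_{v}-\delta$) rather than detouring through $\partial\mathcal{Y}^{+}$. What the paper's route buys is an explicit link to the boundary structure of $\mathcal{Y}^{+}$, which it reuses later; what your route buys is a self-contained, case-free argument relying only on closedness of $\mathcal{Y}^{+}$ and the free-disposal property $\mathcal{Y}^{+}+\R_{+}^{p}=\mathcal{Y}^{+}$.
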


\noindent \begin{proof} Due to the boundedness of $\mathcal{Y}$,
given an arbitrary $v\in\Bbb R^{p}$, there always exists a translation
of axes so that $v$ and $\mathcal{Y}$ are two proper subsets of
$\Bbb R_{+}^{p}$. Hence, without loss of generality, we can make
an assumption $v\cup\mathcal{Y}^{+}\subset{\rm int}\Bbb R_{+}^{p}$
. Under this assumption, two possible cases may occur, namely, $v\in\mathcal{Y}^{+}$
and $v\notin\mathcal{Y}^{+}$. We investigate the lemma in each case.

Firstly, if $v\notin\mathcal{Y}^{+}$, we denote $\ell^{+}=\{v+t\hat{d}\mid t\geq0\}$
to be the ray starting from $v$ along direction $\hat{d}$. As a
result of \cite{Benoist2001b}, $\ell^{+}$ and $\partial\mathcal{Y}^{+}$
always intersect at a unique point $w_{v}\in{\rm WMin}\mathcal{Y}^{+}$.

If $v\in\mathcal{Y}^{+}$, because $\mathcal{Y}^{+}\in{\rm int}\Bbb R_{+}^{p}$,
there does not exist any line which is a subset of $\mathcal{Y}^{+}$.
Due to the closedness of $\mathcal{Y}^{+}$, $\ell(v)\cap\mathcal{Y}^{+}$
is also closed. Let $T$ be the feasible domain of \ref{eq:P_0_v}
and $t^{*}$ its optimal solution. Due to a property of the projection
$\Pi:\ell(v)\rightarrow\Bbb R$, we also have that $T$ is a closed
set. Obviously, if $t\in T$ and $t'>t$ then $t'\in T$. By definition,
$\ell^{+}=\{v+t\hat{d}\mid t\geq0\}\in\mathcal{Y}^{+}$. Thus, either
$t^{*}=-\infty$ or $t^{*}\in T$ finite and $t^{*}\leq0$. If $t^{*}=-\infty$
then $\ell(v)$ is a proper subset of $\mathcal{Y}^{+}$. This statement
contradicts the fact that such line does not exist. This yields $t^{*}\leq0$
has to be a finite real number. At that point, we let $w_{v}=v+t^{*}\hat{d}$.
It can easily be seen that $w$ belongs to the boundary of $\mathcal{Y}^{+}$.
Indeed, by definition, we already have $\bar{w}\in\mathcal{Y}^{+}$.
Moreover, for all $\delta>0$, the ball $B_{\delta}(w)$ of radius
$\delta$ centered at $w$ always contains a point $\tilde{w}=v+\tilde{t}\hat{d}\in\Gamma,\tilde{t}<t^{*}$
which does not belong to $\mathcal{Y}^{+}$.

Assuming $w\notin{\rm WMin}\mathcal{Y}^{+}$, then there exists a
point $w'\in(w-{\rm int}\Bbb R_{+}^{p})\cap\mathcal{Y}^{+}<w$. Therefore,
the ball $B_{\delta'}(w)$ centered at $w$ of some positive radius
$\delta'$ such that $B_{\delta'}(w)\subset w'+{\rm int}\Bbb R_{+}^{p}\subset\mathcal{Y}^{+}$
exists, which contradicts $w\in\partial\text{\ensuremath{\mathcal{Y}}}^{+}$.
The proof is complete.\end{proof}

The explicit form of \ref{eq:P_0_v} is the following problem 
\begin{align}
\min\,\, & t\tag*{\ensuremath{(P^{1}(v))}}\label{eq:P_1_v}\\
\mbox{s.t.\,\,} & f(x)-t\hat{d}-v\leq0,\nonumber \\
 & x\in\mathcal{\mathcal{S}},\;t\in\Bbb R.\nonumber 
\end{align}
This problem is nonconvex in general, for example, $f(x)=\sqrt{|x|},\mathcal{\mathcal{S}}=[-1,1].$
Therefore, it is difficult to determine a weakly nondominated outcome
point as well as a weakly efficient solution of $(QVP)$. However,
we can transform problem \ref{eq:P_0_v} into the form 
\begin{align}
\min\,\, & \max\{\dfrac{f_{j}(x)-v_{j}}{\hat{d}_{j}}\mid j=1,...,p\}\tag*{\ensuremath{(P^{2}(v))}}\label{eq:Pbarv}\\
\mbox{s.t.}\;\; & x\in\mathcal{\mathcal{S}}.\nonumber 
\end{align}
It is worthy to note that \ref{eq:Pbarv} can be viewed as a weighted Chebyshev function (see \cite{Kaliszewski2012}) of which weights are $\frac{1}{\hat{d}_j}$. The following lemma shows that \ref{eq:Pbarv} is equivalent to \ref{eq:P_0_v}
and it is a problem of minimizing a strictly quasiconvex function
over a convex set. 

\begin{lemma}\label{lem_equiv} Problems \ref{eq:P_0_v} and \ref{eq:Pbarv}
are equivalent, i.e. if $(x^{*},t^{*})$ is the optimal solution of
\ref{eq:P_0_v} then $x^{*}$ is the optimal solution of \ref{eq:Pbarv};
conversely, if $x^{*}$ is the optimal solution and $t^{*}$ is the
optimal value of problem \ref{eq:Pbarv} then $(x^{*},t^{*})$ is
the optimal solution of \ref{eq:P_0_v}. Moreover, problem \ref{eq:Pbarv}
is a strictly quasiconvex programming problem.\end{lemma}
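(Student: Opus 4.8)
The plan is to eliminate the epigraph variable $t$ from \ref{eq:P_0_v} and recognize the remaining problem as \ref{eq:Pbarv}, and then to check separately that the objective of \ref{eq:Pbarv} is strictly quasiconvex. For the first part, recall $\mathcal{Y}^{+}=f(\mathcal{S})+\R_{+}^{p}$, so the constraint $v+t\hat{d}\in\mathcal{Y}^{+}$ appearing in \ref{eq:P_0_v} --- equivalently, in its explicit form \ref{eq:P_1_v}, where a feasible point is a pair $(x,t)$ with $x\in\mathcal{S}$ --- holds exactly when $f(x)\le v+t\hat{d}$ for some $x\in\mathcal{S}$. Since $\hat{d}>0$, dividing the $j$-th coordinate inequality by $\hat{d}_{j}$ turns this into $\frac{f_{j}(x)-v_{j}}{\hat{d}_{j}}\le t$ for all $j$, i.e. into $\varphi(x):=\max_{1\le j\le p}\frac{f_{j}(x)-v_{j}}{\hat{d}_{j}}\le t$. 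Thus $(x,t)$ is feasible for \ref{eq:P_1_v} if and only if $x\in\mathcal{S}$ and $\varphi(x)\le t$, and minimizing $t$ over such pairs produces precisely $\min_{x\in\mathcal{S}}\varphi(x)$, which is \ref{eq:Pbarv}.

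Next I would make the correspondence of optimizers precise. If $(x^{*},t^{*})$ solves \ref{eq:P_0_v}, then $f(x^{*})\le v+t^{*}\hat{d}$ gives $\varphi(x^{*})\le t^{*}$, while optimality of $t^{*}$ says no $t<t^{*}$ is feasible, hence $\min_{x\in\mathcal{S}}\varphi(x)\ge t^{*}$; combining, $\varphi(x^{*})=t^{*}=\min_{x\in\mathcal{S}}\varphi(x)$, so $x^{*}$ solves \ref{eq:Pbarv} with value $t^{*}$. Conversely, if $x^{*}$ solves \ref{eq:Pbarv} with value $t^{*}=\varphi(x^{*})$, then $f(x^{*})\le v+t^{*}\hat{d}$ makes $(x^{*},t^{*})$ feasible for \ref{eq:P_0_v}, and no smaller $t$ can be feasible without contradicting optimality of $x^{*}$ in \ref{eq:Pbarv}; hence $(x^{*},t^{*})$ is optimal for \ref{eq:P_0_v}. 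Attainment on both sides is clear: $\mathcal{S}$ is convex and compact and $f$ is continuous, so $\varphi$ attains its minimum on $\mathcal{S}$ and $\mathcal{Y}^{+}$ is closed.

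For the final assertion, the feasible set of \ref{eq:Pbarv} is the convex compact set $\mathcal{S}$, so everything hinges on showing $\varphi$ is strictly quasiconvex (it is automatically continuous, being a finite maximum of continuous functions). Each $\varphi_{j}:=\frac{f_{j}-v_{j}}{\hat{d}_{j}}$ is a positive scaling plus a constant shift of $f_{j}$, hence continuous and strictly quasiconvex whenever $f_{j}$ is; being continuous and strictly quasiconvex, each $\varphi_{j}$ is in particular quasiconvex (as already used above). The step that needs care --- the main obstacle --- is that a finite maximum of strictly quasiconvex functions is not, on the face of it, strictly quasiconvex; the argument goes through only because continuity also gives each $\varphi_{j}$ ordinary quasiconvexity, and one must split on whether $\varphi_{j}(x^{1})<\varphi_{j}(x^{2})$ or not. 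Concretely: given $x^{1},x^{2}\in\mathcal{S}$ with $\varphi(x^{1})<\varphi(x^{2})$ and $0<\lambda<1$, put $x^{\lambda}=\lambda x^{1}+(1-\lambda)x^{2}$; for each $j$ we have $\varphi_{j}(x^{1})\le\varphi(x^{1})<\varphi(x^{2})$, and then either $\varphi_{j}(x^{1})<\varphi_{j}(x^{2})$, whence $\varphi_{j}(x^{\lambda})<\varphi_{j}(x^{2})\le\varphi(x^{2})$ by strict quasiconvexity of $\varphi_{j}$, or $\varphi_{j}(x^{1})\ge\varphi_{j}(x^{2})$, whence $\varphi_{j}(x^{\lambda})\le\max\{\varphi_{j}(x^{1}),\varphi_{j}(x^{2})\}=\varphi_{j}(x^{1})<\varphi(x^{2})$ by quasiconvexity of $\varphi_{j}$. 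Maximizing over the finitely many $j$ gives $\varphi(x^{\lambda})<\varphi(x^{2})$, so $\varphi$ is strictly quasiconvex and \ref{eq:Pbarv} is indeed a problem of minimizing a strictly quasiconvex function over a convex set.
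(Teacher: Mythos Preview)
Your proof is correct and, for the equivalence of \ref{eq:P_0_v} and \ref{eq:Pbarv}, follows the same epigraph-elimination idea as the paper: both you and the paper rewrite the feasibility of \ref{eq:P_1_v} as $t\ge\max_{j}\frac{f_{j}(x)-v_{j}}{\hat d_{j}}$ and then argue the two-way correspondence of optimizers by contradiction (or, equivalently, directly as you do).

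The one place you diverge is the strict quasiconvexity of the objective of \ref{eq:Pbarv}. The paper disposes of this in one line by citing \cite{Mangasarian1969} for the fact that a finite maximum of strictly quasiconvex functions is strictly quasiconvex. You instead supply a self-contained argument, correctly flagging that strict quasiconvexity alone does not obviously pass to maxima and that one needs the auxiliary fact that a \emph{continuous} strictly quasiconvex function is quasiconvex to handle the case $\varphi_{j}(x^{1})\ge\varphi_{j}(x^{2})$. Your case split is exactly the standard proof of the result the paper cites, so the two approaches are logically equivalent; yours is more transparent about where continuity enters, while the paper's is more economical.
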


\begin{proof}We consider this proof under the known equivalence of
\ref{eq:P_0_v} and \ref{eq:P_1_v}. Let $(x^{*},t^{*})$ be the optimal
solution of \ref{eq:P_1_v}. The feasible condition of \ref{eq:P_1_v}
can be rewritten as
\[
t\geq\max\{\dfrac{f_{j}(x)-v_{j}}{\hat{d}_{j}}\mid j=1,\dots,p\},\;x\in\mathcal{S},\;t\in\Bbb R.
\]

Assuming there exists $x\in\mathcal{S}$ such that $$\max\{\dfrac{f_{j}(x)-v_{j}}{\hat{d}_{j}}\mid j=1,\dots,p\}<\max\{\dfrac{f_{j}(x^{*})-v_{j}}{\hat{d}_{j}}\mid j=1,\dots,p\}.$$Let $t=\max\{\dfrac{f_{j}(x)-v_{j}}{\hat{d}_{j}}\mid j=1,\dots,p\}$
then $(x,t)$ is feasible and corresponds to a better objective value
of \ref{eq:P_1_v}, which is contrary. Therefore, $$x^{*}=\min\{\max\{\dfrac{f_{j}(x)-v_{j}}{\hat{d}_{j}}\mid j=1,\dots,p\},x\in S\}.$$This is sufficient to conclude that $x^{*}$ is the optimal solution
of \ref{eq:Pbarv}.

On the other hand, let $x^{*}$ be the optimal solution and $t^{*}$
be the optimal value of \ref{eq:Pbarv}. We have $x^{*}\in\mathcal{S}$
and $t^{*}=\max\{\dfrac{f_{j}(x^{*})-v_{j}}{\hat{d}_{j}}\mid j=1,\dots,p\}\in\Bbb R$
so that $(x^{*},t^{*})$ belongs to the feasible domain of \ref{eq:P_1_v}.
We now assume there exists $x\in\mathcal{S},t\in\Bbb R$ such that
$t<t^{*}$ and $f(x)-t\hat{d}-v\leq0$. This yields that $t\geq\max\{\dfrac{f_{j}(x)-v_{j}}{\hat{d}_{j}}\mid j=1,\dots,p\}$.
Therefore, the objective value of $(P^{2}(f(x)))$ is less than $t^{*}$.
This contradicts the fact that $t^{*}$ is the optimal value of \ref{eq:Pbarv}.
In other words, $(x^{*},t^{*})$ must be the optimal solution of \ref{eq:P_1_v}.

Because each $f_{j}(x)$ is a strictly quasiconvex function and each
$\hat{d}_{j}>0$, from \cite{Mangasarian1969}, we have that $\max\{\dfrac{f_{j}(x)-v_{j}}{\hat{d}_{j}}\},j=1,\dots,p$
are also strictly quasiconvex functions. Hence, \ref{eq:Pbarv} is
a strictly quasiconvex programming problem.\end{proof}

The next theorem is crucial for the method to generate a nondominated
outcome point and a weakly efficient solution of $(QVP)$.

\begin{theorem} For any point $v\in\R^{p}$, let $x_{v}$ and $t_{v}$
be the optimal solution and the optimal value of the problem \ref{eq:Pbarv},
respectively. Then, $w_{v}=v+t_{v}\hat{d}$ is a weakly nondominated
point of $\mathcal{Y}^{+}$ and $x_{v}$ is a weakly efficient solution
of $(QVP)$. \end{theorem}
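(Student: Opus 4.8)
The plan is to stitch together the two lemmas that precede the theorem, plus Proposition~\ref{lem_MinY}, so that essentially no new work remains. First I would invoke Lemma~\ref{lem_equiv}: since $x_v$ is the optimal solution of \ref{eq:Pbarv} with optimal value $t_v$, the pair $(x_v,t_v)$ is the optimal solution of \ref{eq:P_0_v}. In particular $t_v=t_{v}$ is exactly the optimal value appearing in \eqref{eq:bar_w}, so the point $w_v=v+t_v\hat d$ coincides with the point $w_v$ of Lemma~\ref{lem_bar_w}.

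Next I would apply Lemma~\ref{lem_bar_w} directly: it asserts that this $w_v$ is the unique intersection of $\ell(v)$ with $\partial\mathcal{Y}^{+}$ and that $w_v\in{\rm WMin}\,\mathcal{Y}^{+}$. That settles the first half of the claim, namely that $w_v$ is a weakly nondominated point of $\mathcal{Y}^{+}$. It only remains to translate ``$w_v$ weakly nondominated'' into ``$x_v$ weakly efficient for $(QVP)$''.

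For that last step I would argue by contradiction. Since $(x_v,t_v)$ is feasible for \ref{eq:P_1_v} we have $f(x_v)\le v+t_v\hat d=w_v$, hence $f(x_v)\in w_v-\R_+^{p}\subseteq\mathcal{Y}^{+}$ and also $f(x_v)\in\mathcal{Y}$. If $x_v$ were not weakly efficient for $(QVP)$, there would be $x\in\mathcal{S}$ with $f(x)<f(x_v)\le w_v$, so $f(x)\in(w_v-{\rm int}\,\R_+^{p})\cap\mathcal{Y}^{+}$, contradicting $w_v\in{\rm WMin}\,\mathcal{Y}^{+}$. (Alternatively one can note $f(x_v)\in\mathcal{Y}$ and $f(x_v)\le w_v\in{\rm WMin}\,\mathcal{Y}^{+}$ force $f(x_v)\in{\rm WMin}\,\mathcal{Y}^{+}$, whence $f(x_v)\in{\rm WMin}\,\mathcal{Y}^{+}\cap\mathcal{Y}={\rm WMin}\,\mathcal{Y}$ by Proposition~\ref{lem_MinY}(ii), i.e. $x_v\in\mathcal{S}_{WE}$.)

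I do not anticipate a genuine obstacle here: the theorem is really a corollary of Lemmas~\ref{lem_bar_w} and \ref{lem_equiv}. The only point requiring a little care is making sure the ``optimal value $t_v$ of \ref{eq:Pbarv}'' is identified with the ``optimal solution $t_v$ of \ref{eq:P_0_v}'' before quoting Lemma~\ref{lem_bar_w}, and confirming the feasibility inequality $f(x_v)\le w_v$ that lets one pull weak nondominance in outcome space back to weak efficiency in decision space.
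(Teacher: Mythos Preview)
Your proposal is correct and follows essentially the same route as the paper: invoke Lemma~\ref{lem_equiv} to identify $(x_v,t_v)$ with the optimal solution of \ref{eq:P_0_v}, apply Lemma~\ref{lem_bar_w} to get $w_v\in{\rm WMin}\,\mathcal{Y}^{+}$, and then use the feasibility inequality $f(x_v)\le w_v$ to conclude $(f(x_v)-{\rm int}\,\R_+^{p})\cap\mathcal{Y}^{+}=\emptyset$. The paper phrases the last step as a direct set inclusion rather than a contradiction, but the content is identical.
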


\begin{proof}According to Lemma \ref{lem_equiv}, $(x_{v},t_{v})$
is also the optimal solution of \ref{eq:P_0_v}. By Lemma \ref{lem_bar_w},
we conclude that $w_{v}\in{\rm WMin}\mathcal{Y}^{+}$. The feasible
domain of \ref{eq:P_1_v} suggests that $x_{v}$ satisfies $f(x_{v})\leq v+t_{v}\hat{d}=w_{v}$.
Thus, $f(x_{v})-{\rm int}\Bbb R_{+}^{p}\subseteq w_{v}-{\rm int}\Bbb R_{+}^{p}$
so that $(f(x_{v})-{\rm int}\Bbb R_{+}^{p})\cap\text{\ensuremath{\mathcal{Y}}}^{+}=\emptyset$.
Because $f(x_{v})\in{\rm WMin}\mathcal{Y}^{+}$, $x_{v}$ is therefore
a weakly efficient solution of the problem \ref{QVP}.\end{proof}

\begin{remark}\label{rm_convextools}From Lemma \ref{lem_equiv},
problem \ref{eq:Pbarv} is a strictly quasiconvex programming problem.
Therefore, by Remark \ref{rm_quasiconvex}, \ref{eq:Pbarv} can be
solved by using some algorithms for convex programming problems.\end{remark}

The following corollary presents the way to verify the weakly efficient
condition of any point in the decision space.

\begin{corollary} \label{cor:verify_we_solution}The point $x^{*}\in\R^{n}$
is a weakly efficient solution of $(QVP)$ if and only if the problem
$(P^{2}(f(x^{*}))$ has the optimal value $t^{*}=0.$ \end{corollary}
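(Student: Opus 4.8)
The plan is to prove both directions of the equivalence by unpacking the definition of the weighted Chebyshev problem $(P^2(f(x^*)))$ and tying it to weak efficiency of $x^*$. First I would observe that when $v = f(x^*)$, the point $x = x^*$ is feasible for $(P^2(f(x^*)))$ and yields objective value $\max\{(f_j(x^*)-f_j(x^*))/\hat d_j \mid j=1,\dots,p\} = 0$, so the optimal value $t^*$ always satisfies $t^* \le 0$. Thus the interesting content is the dichotomy $t^* = 0$ versus $t^* < 0$.

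For the ``if'' direction, suppose $t^* = 0$. By the theorem just proved, $x_v$ (the optimizer of $(P^2(v))$ with $v = f(x^*)$) is a weakly efficient solution and $w_v = f(x^*) + 0\cdot\hat d = f(x^*) \in {\rm WMin}\,\mathcal{Y}^+$. Since $x^* \in \mathcal{S}$, we have $f(x^*) \in \mathcal{Y} \subseteq \mathcal{Y}^+$, and $f(x^*) \in {\rm WMin}\,\mathcal{Y}^+$ together with $f(x^*) \in \mathcal{Y}$ gives, by Proposition \ref{lem_MinY}(ii), that $f(x^*) \in {\rm WMin}\,\mathcal{Y}$; hence $x^*$ is a weakly efficient solution of $(QVP)$. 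Alternatively, and more directly: if $x^*$ were not weakly efficient, there would exist $x \in \mathcal{S}$ with $f(x) < f(x^*)$, i.e. $f_j(x) - f_j(x^*) < 0$ for all $j$, whence $\max_j (f_j(x)-f_j(x^*))/\hat d_j < 0$ (using $\hat d_j > 0$), contradicting $t^* = 0$ being optimal.

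For the ``only if'' direction, suppose $x^*$ is weakly efficient but, for contradiction, $t^* < 0$. Let $x_v$ attain this value, so $\max_j (f_j(x_v) - f_j(x^*))/\hat d_j = t^* < 0$, which forces $f_j(x_v) - f_j(x^*) \le \hat d_j t^* < 0$ for every $j$, i.e. $f(x_v) < f(x^*)$ with $x_v \in \mathcal{S}$. This exhibits a feasible point dominating $x^*$ strictly in every component, contradicting the weak efficiency of $x^*$. Therefore $t^* \ge 0$, and combined with $t^* \le 0$ from the first observation we get $t^* = 0$.

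I do not anticipate a genuine obstacle here; the statement is essentially a bookkeeping consequence of the structure of the Chebyshev scalarization and the sign of $\hat d$. The only point requiring mild care is making sure the strict inequalities are handled correctly (that $\max_j (\cdot) < 0$ is equivalent to each term being negative, which uses $\hat d_j > 0$), and noting that feasibility of $x^*$ in $(P^2(f(x^*)))$ is what pins down $t^* \le 0$ so that the dichotomy is between $0$ and strictly negative values. One could also phrase the whole argument through Lemma \ref{lem_equiv} and Lemma \ref{lem_bar_w} (passing through $w_v$ and $\partial\mathcal{Y}^+$), but the direct componentwise argument above is shorter and self-contained.
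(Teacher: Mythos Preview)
Your argument is correct. The paper's proof takes the geometric route you mention at the end: for the ``if'' direction it invokes Lemma~\ref{lem_equiv} and Lemma~\ref{lem_bar_w} to conclude that $w^*=f(x^*)+0\cdot\hat d=f(x^*)\in{\rm WMin}\,\mathcal{Y}^+$, hence $x^*\in\mathcal{S}_{WE}$; for the ``only if'' direction it argues that $f(x^*)\in\partial\mathcal{Y}^+$ and that $f(x^*)+t\hat d\notin\mathcal{Y}^+$ for every $t<0$, so $t^*=0$ is optimal in $(P^0(v^*))$. Your primary argument instead works directly with the componentwise inequalities of the Chebyshev scalarization and never passes through $\mathcal{Y}^+$ or its boundary. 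This buys you a self-contained proof that does not depend on the earlier lemmas (and in particular does not rely on the somewhat delicate boundary analysis in Lemma~\ref{lem_bar_w}); the paper's version, by contrast, is shorter to state precisely because it cashes in on that prior machinery. Your preliminary observation that $t^*\le 0$ (by feasibility of $x^*$ itself) is a nice touch that the paper leaves implicit. One small point: both your proof and the paper's tacitly use $x^*\in\mathcal{S}$, even though the corollary is stated for $x^*\in\mathbb{R}^n$; this is harmless in context but worth noting.
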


\begin{proof}Let $v^{*}=f(x^{*})\in\Bbb R^{p}$. If $t^{*}=0$ is
the optimal value of $(P^{2}(v^{*}))$, using the equivalence in Lemma
\ref{lem_equiv}, and from Lemma \ref{lem_bar_w}, we have $w^{*}=v^{*}+t^{*}\hat{d}\equiv v^{*}=f(x^{*})\in{\rm WMin}\mathcal{Y}^{+}$.
Hence, $x^{*}\in\mathcal{S}_{WE}$. Conversely, suppose that $x^{*}$
is a weakly efficient solution of \ref{QVP}. Then $v^{*}=f(x^{*})\in\partial\mathcal{Y}^{+}$.
It is obvious that for $t<0$ we have $v^{*}+t\hat{d}\notin\text{\ensuremath{\mathcal{Y}}}^{+}$
and $t^{*}=0$ is the smallest value of $t$ which satisfies the feasible
condition of \ref{eq:P_0_v}.\end{proof}

Procedure 1 shows the way to generate a weakly efficient solution
of \ref{QVP} from a point $v\in\R^{p}$.\medskip{}

\noindent \begin{algorithm}[H]
\SetAlgorithmName{Procedure 1}{procedure}{List of Procedures}
\renewcommand{\thealgocf}{}
\let\oldnl\nl
\newcommand{\nonl}{\renewcommand{\nl}{\let\nl\oldnl}}
\caption{\textit{GenerateWES(v)}}
\label{algo:generate}
\KwIn{A point $v\in \R^p$}
\KwOut{A nondominated outcome point and a weakly efficient solution of \ref{QVP} (Remark \ref{rm_convextools})}
\vspace{0.1cm}
Solve problem $({\rm P}^{2}(v))$ to find an optimal solution $(x_v,t_v)$\\
Set $w_v \gets v + t_v \hat{d}$\\
({\it $x_v$ is a weakly efficient solution of \ref{QVP} and $w_v$ is a weakly nondominated point of $\mathcal{Y}^+$})
\end{algorithm}

\medskip{}
Due to Corollary \ref{cor:verify_we_solution}, an arbitrary point
$x\in\R^{n}$ can be verified to be a weakly efficient solution of
\ref{QVP} by Procedure 2.

\noindent \begin{algorithm}[H]
\SetAlgorithmName{Procedure 2}{procedure}{List of Procedures}
\renewcommand{\thealgocf}{}
\let\oldnl\nl
\newcommand{\nonl}{\renewcommand{\nl}{\let\nl\oldnl}}
\caption{\textit{VerifyWES($x^*$)}}
\label{algo:verify}
\KwIn{A point $x^*\in \R^n$}
\vspace{0.1cm}
Set $v \gets f(x^*)$\\
Solve problem $({\rm P}^{2}(v))$ to find an optimal solution $(x_v,t_v)$\\
\If {$t_v = 0$} {
	$x^*$ is a weakly efficient solution of \ref{QVP}
}
\end{algorithm}

\subsection{The cutting cones and outer approximate outcome sets}
\label{section:the_cutting_cones}
For convenience, we first recall some concepts of monotonic optimization
which have been developed in \cite{Tuy1999} and \cite{Tuy2000}.
Consider a set $Q\subset\R^{p}$ contained in box $[m,M].$ The
set $Q$ is called \emph{normal} if $[m,y]\subset Q$ for all $y\in Q,$
and is called \emph{conormal} if $[y,M]\subset Q$ for all $y\in Q.$ Throughout
this paper, we only consider the concepts related to conormal sets.

It is known that the intersection of any family of conormal sets is
a conormal set. The intersection of all conormal sets containing $Q$
is called the \emph{conormal hull} of $Q$ and denoted by $\mathcal{N}(Q)$
. It is also the smallest conormal set containing $Q$.

The conormal hull of a finite set $V\subset[m,M]$ is said to be \emph{copolyblock}
$P,$ with vertex set $V,$ i.e. $P=\bigcup_{v\in V}[v,M]$ or $P=\mathcal{N}(V).$
A vertex $v\in P$ is called \emph{proper} if there is no vertex $v'\in P$
such that $v'\not=v$ and $v'\leq v.$ An \emph{improper} vertex of
$P$ is an element of $V$ which is not a proper vertex. Obviously,
a copolyblock is fully determined by its proper vertex set. It means
that a copolyblock is the conormal hull of its proper vertices.

The following propositions recall some main properties of copolyblocks
and will be used in the sequel.

\begin{proposition}\label{prop_copolyblock}

(i) The intersection of finitely many copolyblocks is a copolyblock.

(ii) The minimum of an increasing function over a copolyblock is achieved at a proper vertex of this copolyblock.

(iii) Any compact conormal set is the intersection of a family of
copolyblocks.

\end{proposition}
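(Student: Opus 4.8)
The plan is to dispatch (i) and (ii) by elementary set manipulations and to concentrate the real work on (iii).

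For (i), I would first reduce to the case of two copolyblocks by induction on their number. Writing $P_{1}=\bigcup_{u\in U}[u,M]$ and $P_{2}=\bigcup_{w\in W}[w,M]$ with $U,W$ finite, the key identity is $[u,M]\cap[w,M]=[\,u\vee w,\,M\,]$, where $u\vee w$ denotes the componentwise maximum; note $u\vee w\in[m,M]$. Distributing the intersection over the two unions then gives $P_{1}\cap P_{2}=\bigcup_{u\in U,\,w\in W}[\,u\vee w,\,M\,]$, which is a copolyblock with the finite vertex set $\{u\vee w\mid u\in U,\ w\in W\}$, and the induction step is immediate.

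For (ii), let $P=\bigcup_{v\in V}[v,M]$ and let $\varphi$ be increasing on $[m,M]$. Every $y\in P$ lies in some $[v,M]$, hence $v\le y$ and therefore $\varphi(v)\le\varphi(y)$; since conversely $V\subseteq P$, this shows $\inf_{y\in P}\varphi(y)=\min_{v\in V}\varphi(v)$, and the latter minimum is attained because $V$ is finite. If a minimizing vertex $v^{*}$ happens to be improper, I would pass to a minimal element $v'$ (with respect to $\le$) of the finite nonempty set $\{v\in V\mid v\le v^{*}\}$; such a $v'$ is necessarily a proper vertex of $P$ and satisfies $\varphi(v')\le\varphi(v^{*})$, so the minimum of $\varphi$ over $P$ is attained at a proper vertex. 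Note that no semicontinuity of $\varphi$ is needed, since everything reduces to the finite set $V$.

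For (iii), the target is $Q=\bigcap\{P\mid P\text{ is a copolyblock},\ Q\subseteq P\}$. The inclusion $\subseteq$ is trivial, and since every copolyblock is contained in $[m,M]$, it suffices to show that each $z\in[m,M]\setminus Q$ is excluded by some copolyblock containing $Q$. This is where conormality enters: if some $y\in Q$ satisfied $y\le z$, then $z\in[y,M]\subseteq Q$ (using $z\le M$), contradicting $z\notin Q$; hence the continuous function $y\mapsto\max_{1\le j\le p}(y_{j}-z_{j})^{+}$ is strictly positive on the compact set $Q$, so it is bounded below there by some $\varepsilon>0$. For each coordinate $j$ with $z_{j}<M_{j}$, choose $0<\varepsilon_{j}\le\varepsilon$ with $z_{j}+\varepsilon_{j}\le M_{j}$, and define $v^{j}\in[m,M]$ by $v^{j}_{j}=z_{j}+\varepsilon_{j}$ and $v^{j}_{k}=m_{k}$ for $k\ne j$; put $P_{z}=\bigcup_{j:\,z_{j}<M_{j}}[v^{j},M]$. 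Then $z\notin[v^{j},M]$ for every such $j$ because $v^{j}_{j}>z_{j}$, so $z\notin P_{z}$; and for any $y\in Q$ there is a coordinate $j$ with $y_{j}-z_{j}\ge\varepsilon$, which forces $z_{j}<M_{j}$ and $v^{j}\le y\le M$, so $y\in P_{z}$ and hence $Q\subseteq P_{z}$. Finally $Q=\bigcap_{z\in[m,M]\setminus Q}P_{z}$, the degenerate case $Q=\emptyset$ being covered by the same construction. The step I expect to require the most care is exactly the extraction of the uniform gap $\varepsilon$ together with the boundary truncation when $z_{j}+\varepsilon>M_{j}$; this is precisely the point at which the compactness of $Q$ and its conormality do the essential work.
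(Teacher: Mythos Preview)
The paper does not actually prove this proposition: it is stated as a recalled fact from the monotonic optimization literature (implicitly Tuy \cite{Tuy1999,Tuy2000}), with no accompanying argument, so there is no ``paper's own proof'' to compare against.

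Your proposal is correct and self-contained. Part (i) via the identity $[u,M]\cap[w,M]=[u\vee w,M]$ and distributivity is the standard argument. Part (ii) is fine; reducing to the finite vertex set and then passing to a $\le$-minimal element to land on a proper vertex is exactly right, and you are correct that no continuity is needed. For part (iii), your separation argument is the natural one: conormality of $Q$ forces every $y\in Q$ to overshoot $z$ in at least one coordinate, compactness upgrades this to a uniform gap $\varepsilon$, and the resulting copolyblock $P_{z}$ with vertices $v^{j}$ ($v^{j}_{j}=z_{j}+\varepsilon_{j}$, $v^{j}_{k}=m_{k}$ for $k\ne j$) does the job. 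The boundary-truncation step you flag is handled correctly: since $Q\neq\emptyset$ implies $M\in Q$ by conormality, the case $z=M$ cannot occur for nonempty $Q$, so the index set $\{j:z_{j}<M_{j}\}$ is always nonempty in that regime; the empty case is trivial provided one allows the empty vertex set (which the paper's definition does). In short, you have supplied a clean proof where the paper offers none.
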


It is easily seen that the outcome set $\mathcal{\mathcal{Y}^{\diamond}}$
is a compact conormal set in the box $[m,M].$ Let a point $v\in[m,M]$
and $v\not\in\mathcal{Y}$. Then, by Lemma \ref{lem_bar_w}, the point
$w_{v}$ determined by \eqref{eq:bar_w} is a weakly nondominated
point of $\mathcal{Y}^{\diamond}$ and $w_{v}\in\partial\mathcal{\mathcal{Y}^{\diamond}}.$
Moreover, since $\hat{d}>0,$ we have $w_{v}>v.$ By Proposition 2.3
in \cite{TuyThach2005}, the cone $\mathcal{C}(w_{v}):=w_{v}-\R_{+}^{p}$
separates $v$ strictly from $\mathcal{Y}^{\diamond}.$ We refer to
the cone $\mathcal{C}(w_{v})$ as the \emph{cutting cone of} $\mathcal{Y}^{\diamond}$
\emph{at} $w_{v}.$ 

\begin{proposition}\label{prop_newV}

Let $P$ be a copolyblock in the box $[m,M]$ with proper vertex set
$V$ such that $\mathcal{\mathcal{Y}^{\diamond}}\subseteq P.$ For
a given $v\in[m,M]\setminus\mathcal{\mathcal{Y}^{\diamond}}$ and
$w_{v}$ determined by \eqref{eq:bar_w}, new copolyblock $P'$ obtained
by applying the cutting cone of $\mathcal{\mathcal{Y}^{\diamond}}$
at $w_{v}$ has vertex set $V',$ where 
\[
V'=\ensuremath{(V\setminus\{v\})\cup\{v-(w_{i}-v_{i})e^{i}\},\quad i=1,\dots,p.}
\]
\end{proposition}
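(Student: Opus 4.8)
The plan is to reduce the assertion to one elementary computation about a single box. Write $b:=w_v$ and let $b_i$ denote its $i$th coordinate. Since $v\in[m,M]\setminus\mathcal{Y}^{\diamond}$ and $\mathcal{Y}\subseteq\mathcal{Y}^{\diamond}$ we have $v\notin\mathcal{Y}$, so the facts recalled just before the statement apply: $b\in{\rm WMin}\,\mathcal{Y}^{\diamond}$, $m\le b\le M$ (because $b\in\mathcal{Y}^{\diamond}\subseteq\mathcal{P}^{0}=[m,M]$), $b>v$, and the cutting cone $\mathcal{C}(b)=b-\R_+^p$ separates $v$ strictly from $\mathcal{Y}^{\diamond}$; in particular $(b-{\rm int}\R_+^p)\cap\mathcal{Y}^{\diamond}=\emptyset$ by the definition of ${\rm WMin}$. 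I take $v$ to be the proper vertex of $P$ that is processed, so $v\in V$, and by the definition of a copolyblock $P=[v,M]\cup\bigcup_{u\in V\setminus\{v\}}[u,M]$. ``Applying the cutting cone at $b$'' I read as the standard copolyblock refinement: keep the blocks $[u,M]$ for $u\ne v$ and replace the block $[v,M]$ by $[v,M]\setminus(b-{\rm int}\R_+^p)$, i.e.
\[
P':=\left(\bigcup_{u\in V\setminus\{v\}}[u,M]\right)\cup\left([v,M]\setminus(b-{\rm int}\R_+^p)\right).
\]

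First I would establish the one-block identity $[v,M]\setminus(b-{\rm int}\R_+^p)=\bigcup_{i=1}^{p}[v^{i},M]$, where $v^{i}:=v+(b_i-v_i)e^{i}$ is $v$ with its $i$th coordinate raised from $v_i$ to $b_i$ (these are the $p$ points adjoined to $V$ in forming $V'$). This is immediate: a point $z$ lies in the left-hand side iff $v\le z\le M$ and $z\notin b-{\rm int}\R_+^p$, i.e.\ iff $v\le z\le M$ and $z_i\ge b_i$ for at least one $i$; and, for $z\ge v$, the condition $z_i\ge b_i$ is exactly $z\ge v^{i}$ (the vectors $v^{i}$ and $v$ agree in every coordinate except the $i$th, where $(v^{i})_i=b_i$). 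Moreover $v\le v^{i}\le M$ coordinatewise (since $v_i\le b_i$ because $b>v$, and $b_i\le M_i$ because $b\le M$), so each $[v^{i},M]$ already sits inside $[v,M]$. Substituting the identity into the display above yields
\[
P'=\bigcup_{u\in V\setminus\{v\}}[u,M]\ \cup\ \bigcup_{i=1}^{p}[v^{i},M]=\mathcal{N}(V'),\qquad V'=(V\setminus\{v\})\cup\{v^{1},\ldots,v^{p}\},
\]
and since $V'$ is a finite subset of $[m,M]$, this exhibits $P'$ as a copolyblock whose vertex set is the $V'$ of the statement.

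It then remains to confirm that this $P'$ is a bona fide cut, namely $\mathcal{Y}^{\diamond}\subseteq P'\subsetneq P$ with $v\notin P'$. For the inclusion, $\mathcal{Y}^{\diamond}\subseteq P$ holds by hypothesis and the only set discarded when forming $P'$ lies in $b-{\rm int}\R_+^p$, which is disjoint from $\mathcal{Y}^{\diamond}$ since $b=w_v\in{\rm WMin}\,\mathcal{Y}^{\diamond}$; hence $\mathcal{Y}^{\diamond}\subseteq P'$. For properness, $v\in[v,M]\subseteq P$, whereas $v\notin P'$: indeed $v\not\ge u$ for every proper vertex $u\in V\setminus\{v\}$ (two distinct proper vertices are incomparable) and $v\not\ge v^{i}$ for every $i$ (they disagree in the $i$th coordinate, where $v_i<b_i=(v^{i})_i$). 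Thus $v\in P\setminus P'$, so $P'\subsetneq P$ and the point $v\notin\mathcal{Y}^{\diamond}$ has indeed been removed.

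I expect the only genuine subtlety to be pinning down the meaning of ``applying the cutting cone'': the operation splits only the processed block $[v,M]$ and leaves the others intact; it is \emph{not} the set difference $P\setminus(b-{\rm int}\R_+^p)$, which in general would also trim any other proper vertex $u$ with $u<b$ and hence produce a strictly larger $V'$. The local refinement used above is the operation employed in the monotonic-optimization scheme (cf.\ \cite{TuyThach2005}), and it is the one for which the stated vertex formula holds. A lesser point worth a remark: $V'$ need not consist solely of \emph{proper} vertices of $P'$ --- a new point $v^{i}$ may dominate some old vertex $u\in V\setminus\{v\}$ --- but the statement claims only that $V'$ is a generating vertex set, improper vertices being admitted by the terminology fixed earlier; deleting from $V'$ any element that dominates another one leaves $P'$ unchanged, and hence, by Proposition~\ref{prop_copolyblock}(ii), leaves unchanged the minimizer of any increasing objective over $P'$, which is all the outer-approximation algorithm needs.
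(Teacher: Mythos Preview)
The paper does not prove this proposition; it is stated without proof as a result recalled from the monotonic-optimization literature (the surrounding paragraph invokes Proposition~2.3 of \cite{TuyThach2005}). Your argument is the standard derivation and is correct.

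Two remarks. First, the formula in the printed statement carries a sign error that you silently corrected: the new vertices must be $v+(w_i-v_i)e^{i}$ (equivalently $v-(v_i-w_i)e^{i}$), raising the $i$th coordinate of $v$ to $w_i$; the printed $v-(w_i-v_i)e^{i}$ would \emph{lower} that coordinate below $v_i$ and hence enlarge rather than shrink the copolyblock, contrary to the cutting intent. The same slip recurs verbatim in the update step of Algorithm~\emph{Solve(QVP)}. Your box identity $[v,M]\setminus(w_v-{\rm int}\R_+^p)=\bigcup_{i=1}^{p}[v+(w_i-v_i)e^{i},M]$ is the correct computation.

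Second, your reading of ``applying the cutting cone'' as the local refinement of the single box $[v,M]$, rather than the global difference $P\setminus(w_v-{\rm int}\R_+^p)$, is exactly the operation encoded in the algorithm's update rule and is the only interpretation under which the stated $V'$ is the resulting vertex set. Your closing observation that $V'$ need not consist of proper vertices is likewise on target: this is precisely why Procedure~\emph{RIV} is called immediately after the update in the algorithm.
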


By Proposition \ref{prop_copolyblock} (iii), we can approximate any
compact conormal set by a copolyblock as closely as desired, similarly
to approximating a compact convex set by a polytope. Therefore, the
compact conormal set $\mathcal{Y}^{\diamond}$ can be approximated
by a family of copolyblocks. Specifically, a nested sequence of copolyblocks
is generated that outer-approximates the outcome set $\mathcal{Y}^{\diamond},$
i.e.
\[
\mathcal{P}^{0}\supset\mathcal{P}^{1}\supset\dots\supset\mathcal{P}^{k}\supset\mathcal{P}^{k+1}\supset\dots\supset\mathcal{Y}^{\diamond},
\]
where the initial copolyblock $\mathcal{P}^{0}=[m,M]$ as constructed
in Section 2.1. The copolyblock $P^{k+1}$ is generated from $P^{k}$
by applying the cutting cone procedure in Proposition \ref{prop_newV}.
The copolyblocks $\mathcal{P}^{k}$ are said to be the \emph{approximate
outcome sets}. From these outer approximate sets, we shall establish
an outer approximation algorithm for solving $(QVP).$ 

\section{The algorithm for solving $(QVP)$ }

By the outcome space approach, the solution set of $(QVP)$ is achieved
by determining an approximation set contained in $\mathcal{Y}_{\varepsilon}^{\diamond}$.
Firstly, we find the set of weakly nondominated points $\mathcal{Y}_{WN}$
and the set of the outer approximation vertices $V_{\varepsilon}.$
Then we can determine the inner and outer approximation set $\mathcal{L}$
and $\mathcal{U}$ of $\mathcal{Y}^{\diamond}.$

\medskip

We propose an outer approximation algorithm to determine weakly nondominated
points of the outcome set $\mathcal{Y}$. Starting from the copolyblock
$\mathcal{P}^{0}=[m,M],$ we iteratively construct a sequence of copolyblocks
$\{\mathcal{P}^{k}\}$ such that 
\[
\mathcal{P}^{0}\supset\mathcal{P}^{1}\supset\dots\supset\mathcal{P}^{k}\supset\mathcal{P}^{k+1}\supset\dots\supset\mathcal{Y}^{\diamond}.
\]
 We will make use of the following notations: 
\begin{itemize}
\item $V^{k}$ is the set of all proper vertices which determines $\mathcal{P}^{k}$;
\item $V_{\varepsilon}$ is a collection of outer approximate weakly nondominated
points;
\item $\mathcal{Y}_{WN}$ is a collection of weakly efficient values. 
\end{itemize}
At the initial step with $k=0,$ we have $V^{0}=\{m\}$, $V_{\varepsilon}=\emptyset$
and $\mathcal{Y}_{WN}=\emptyset$.

\noindent In a typical iteration $k$, if every vertex in $V^{k}$
is an outer approximate weakly nondominated point, the algorithm terminates.
Otherwise, there is some $v^{k}\in V^{k}\setminus V_{\varepsilon}$.
In this case, by solving $({\rm P}^{2}(v^{k})),$ we find a new weakly
efficient value $f(x^{k})$ of \ref{QVP} and add it into the set
$\mathcal{Y}_{WN}.$ We also obtain a weakly nondominated point $w^{k}=v^{k}+t_{k}\hat{d}$
of $\mathcal{Y}^{+}$, where $(x^{k},t_{k})$ is the optimal solution
of $({\rm P}(v^{k}))$. If $v^{k}$ is close enough to $w^{k}$, $v^{k}$
is an outer approximate weakly nondominated point and is added to
$V_{\varepsilon}$. Otherwise, a new approximation $\mathcal{P}^{k+1}$
is determined by applying the cutting cone procedure in Proposition
\ref{prop_newV}. As proved later, for sufficiently large $k$, all
vertices of $\mathcal{P}^{k}$ are close enough to $\mathcal{Y}^{\diamond}$
and the algorithm terminates. This algorithm is described in Algorithm \textit{Solve(QVP)}
and Procedure \textit{RIV($V^{k+1}$, $v^k$, $w^k$)} as follows.

\medskip

\begin{algorithm}[H]
\SetAlgorithmName{Procedure \textit{RIV($V^{k+1}$, $v^k$, $w^k$)}}{procedure}{List of Procedures}
\renewcommand{\thealgocf}{}
\let\oldnl\nl
\newcommand{\nonl}{\renewcommand{\nl}{\let\nl\oldnl}}
\caption{}
\label{algo:riv}
\KwIn{The new vertex set $V^{k+1}$ probably including improper elements, previously chosen $v^k$, corresponding weakly nondominated point $w^k$.}
\KwOut{The new vertex set $V^{k+1}$ with all proper elements.}
\vspace{0.1cm}
\ForEach {$w \in V^k \setminus \{ v^k \}$} {
	\If {$w \geq v^k$ and $w_k < w^k_i$ for exactly one $i$ in $\{1,\dots,p\}$} {
		\tcc{i.e., $\exists i$ such that $w_i < w^k_i, w_j \geq w^k_j\;\; \forall j \neq i,\;\;i,j \in \{1,\dots,p\}$.}
		Remove $w_i^k$.
	}
}
\end{algorithm}

\noindent \begin{algorithm}[H]
\SetAlgorithmName{Algorithm \textit{Solve(QVP)}}{algorithm}{List of Procedures}
\renewcommand{\thealgocf}{}
\let\oldnl\nl
\newcommand{\nonl}{\renewcommand{\nl}{\let\nl\oldnl}}
\caption{}
\label{algo:solve_qvp}
\SetAlgoRefName{Solve(QVP)}
\KwIn{Objective function $f$ and constraint sets.}
\KwOut{The collection of outer approximate weakly nondominated points.}
\vspace{0.1cm}
Choose a tolerance level $\varepsilon = \epsilon{e}\geq 0$ where $\epsilon\in \R$ and $e=(1,\dots,1)\in\R^p$.\\
Find two points $m, M$ by solving the problems $(P_{i}^{m}), (P_{i}^{M})$ for $i=1,\dots,p$;\\
\nonl $\mathcal{P}^{0}\gets[m,M].$\\
Determine the set $V^0$ and choose a direction $\hat{d}>0$, for instance $\hat{d}=e$. \\
Initialize $V_{\varepsilon} \gets \emptyset; \; \mathcal{Y}_{WN} \gets \emptyset; \; k \gets 0.$\\
\While{$V^{k}\setminus V_{\varepsilon} \not= \emptyset\;$} {
	Choose any $v^{k}\in V^{k}\setminus V_{\varepsilon}$.\\
	Solve problem $({\rm P}^{2}(v^{k}))$ to find an optimal solution $(x^{k},t_{k})$ and set\\
	\nonl\quad $w^{k} \gets v^{k}+t_{k}\hat{d}$;\\
	\nonl\quad $\mathcal{Y}_{WN} \gets \mathcal{Y}_{WN}\cup\{f(x^{k})\}$.\\
	\uIf {$\Vert w^{k}-v^{k}\Vert\leq\epsilon$} {
		$V_{\varepsilon}\gets V_{\varepsilon}\cup\{v^{k}\}$;\\
		\textbf{continue}.\\
	}
	\Else {
		$V^{k+1}\gets (V^{k}\setminus\{v^{k}\})\cup\{v^{k}-(w_{i}^{k}-v_{i}^{k})e^{i}\},i=1,\dots,p$;\\
		Remove improper elements by Procedure \textit{RIV}($V^{k+1}$, $v^k$, $w^k$).
	}
	$k \gets k+1$;
}
\vspace{0.1cm}
\KwRet{$V_{\varepsilon}$}.
\end{algorithm}

\medskip

Suppose the algorithm is terminated at Iteration $K.$ Then, we obtain
two sets $\mathcal{Y}_{WN}$ and $V_{\varepsilon}.$ From these sets,
we define

\[
\mathcal{L}:=\mathcal{N}(\mathcal{Y}_{WN})
\]
and

\[
\mathcal{U}:=\mathcal{N}(V_{\varepsilon})\equiv\mathcal{P}^{K}.
\]
It can be verified that $\mathcal{L}$ and $\mathcal{U}$ are inner
and outer approximation of $\mathcal{Y}^{\diamond}$ respectively,
and their weakly nondominated sets are approximate weakly nondominated
sets of $\mathcal{Y}^{\diamond}.$ Based on the outer approximation $\mathcal{U},$ we can establish
the outer approximation $ES$ of the weakly efficient solution set
$\mathcal{S}_{WE}$ of problem $(QVP),$ that is 

\[
ES:=\underset{y\in\mathcal{U}_{\varepsilon}\cap\mathcal{Y}^{\diamond}}{\bigcup}\{x\in\mathcal{S}\mid f(x)\leq y\}
\]

By Corollary \ref{thm_ES} in the following section, it is proved
that $ES$ contains the weakly $\varepsilon-$efficient solution of
$(QVP)$ and also contains the whole weakly efficient solution set
of this problem.

\section{The convergence of Algorithm $Solve$$(QVP)$}

We will consider Hausdorff distance between two closed sets $Q_{1},Q_{2}\subset\Bbb R^{p}$
defined as follows.

\begin{eqnarray*}
d_{H}(Q_{1},Q_{2}) & = & \mbox{inf}\{t>0:Q_{1}\subseteq Q_{2}+tU_{p},Q_{2}\subseteq Q_{1}+tU_{p}\}\\
 & = & \mbox{max}\{\underset{v_{1}\in Q_{1}}{\mbox{sup}}d(v_{1},Q_{2}),\underset{v_{2}\in Q_{2}}{\mbox{sup}}d(v_{2},Q_{1})\},
\end{eqnarray*}
 where $U_{p}$ is the closed unit ball in $\Bbb R^{p}$ and the distance
from a point $v$ to a set $Q\subset\Bbb R^{p}$ is defined by $d(v,Q)=\underset{y\in Q}{\mbox{inf}}||v-y||$.
We say that a sequence of nonempty closed sets $\{Q_{k}\}_{k=1}^{\infty}\subseteq\Bbb R^{p}$
converges to a closed set $Q$ if $\mbox{lim}_{k\rightarrow\infty}d_{H}(Q_{k},Q)=0$
and write $\mbox{lim}_{k\rightarrow\infty}Q_{k}=Q$.

\begin{lemma}\label{lem:d_equal}

For each $v\in\mathcal{P}^{0}\setminus\mathcal{Y}^{\diamond}$, we
have $d(v,\mathcal{Y}^{\diamond})=d(v,\mathcal{Y}^{+})$.

\end{lemma}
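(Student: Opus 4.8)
The plan is to show the two distances coincide by proving the infimum defining each is attained at the same point, or more precisely that the nearest point of $\mathcal{Y}^{\diamond}$ to $v$ is also the nearest point of $\mathcal{Y}^{+}$. Since $\mathcal{Y}^{\diamond} = \mathcal{Y}^{+} \cap (M - \R_{+}^{p}) \subseteq \mathcal{Y}^{+}$, the inequality $d(v,\mathcal{Y}^{\diamond}) \geq d(v,\mathcal{Y}^{+})$ is immediate: a larger set can only be closer. So the whole content is the reverse inequality $d(v,\mathcal{Y}^{\diamond}) \leq d(v,\mathcal{Y}^{+})$.

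For the reverse inequality, first I would use compactness: $\mathcal{Y}$ is compact, hence $\mathcal{Y}^{+} = \mathcal{Y} + \R_{+}^{p}$ is closed, so there is a point $y^{*} \in \mathcal{Y}^{+}$ with $\|v - y^{*}\| = d(v,\mathcal{Y}^{+})$. The key step is then to argue that this $y^{*}$ may be taken inside the box $M - \R_{+}^{p}$, i.e. that $y^{*} \leq M$ componentwise, so that $y^{*} \in \mathcal{Y}^{\diamond}$ and therefore $d(v,\mathcal{Y}^{\diamond}) \leq \|v - y^{*}\| = d(v,\mathcal{Y}^{+})$. To see $y^{*} \leq M$, suppose some coordinate $y^{*}_{j} > M_{j}$. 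Since $v \in \mathcal{P}^{0} = [m,M]$ we have $v_{j} \leq M_{j} < y^{*}_{j}$. Define $\tilde{y}$ by replacing the $j$-th coordinate of $y^{*}$ with $\max\{v_{j}, M_{j}\} = M_{j}$ (and more generally truncating every offending coordinate down to $M_{j}$), keeping the others. Because $\mathcal{Y}^{+}$ is conormal and $\mathcal{Y}^{+} \subseteq [m,M] + (\text{upward directions})$ — more carefully, because $\mathcal{Y} \subseteq [m,M]$ and $y^{*} = y + r$ with $y \in \mathcal{Y}$, $r \geq 0$ — reducing a coordinate of $y^{*}$ that lies above $M_{j} \geq y_{j}$ keeps us in $y + \R_{+}^{p} \subseteq \mathcal{Y}^{+}$. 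At the same time, moving coordinate $j$ from $y^{*}_{j}$ down toward $v_{j}$ (stopping at $M_{j} \geq v_{j}$) does not increase $|v_{j} - \cdot|$, hence $\|v - \tilde{y}\| \leq \|v - y^{*}\|$, with $\tilde{y} \in \mathcal{Y}^{\diamond}$. This yields $d(v,\mathcal{Y}^{\diamond}) \leq \|v - \tilde{y}\| \leq \|v - y^{*}\| = d(v,\mathcal{Y}^{+})$, completing the proof.

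The main obstacle — really the only subtle point — is the claim that truncating $y^{*}$ downward in the coordinates where it exceeds $M$ keeps the point in $\mathcal{Y}^{+}$. The clean way to handle this is to write $y^{*} = y + r$ with $y \in \mathcal{Y}$ and $r \in \R_{+}^{p}$; since $y \in \mathcal{Y} \subseteq [m,M]$ we have $y_{j} \leq M_{j}$, so the truncated point $\tilde{y}$, which in each coordinate equals either $y^{*}_{j}$ (where $y^{*}_{j} \leq M_{j}$) or $M_{j}$ (where $y^{*}_{j} > M_{j}$), still satisfies $\tilde{y} \geq y$ in every coordinate; hence $\tilde{y} \in y + \R_{+}^{p} \subseteq \mathcal{Y}^{+}$, and by construction $\tilde{y} \leq M$, so $\tilde{y} \in \mathcal{Y}^{\diamond}$. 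The distance comparison is then the elementary observation that projecting a single coordinate onto the interval between $v_{j}$ and $M_{j}$ (both $\leq y^{*}_{j}$) cannot increase the Euclidean distance to $v$.
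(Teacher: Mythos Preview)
Your proof is correct and takes a genuinely different, more elementary route than the paper's.

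The paper argues structurally: it shows that for a compact conormal set $Q$ not containing $v$, the nearest point of $Q$ to $v$ must lie in $v+\R_{+}^{p}$; that on $(v+\R_{+}^{p})\cap Q$ the map $y\mapsto d(v,y)$ is increasing, so its minimum is attained at a nondominated point; and then, applying this to both $\mathcal{Y}^{\diamond}$ and (a compact truncation of) $\mathcal{Y}^{+}$, it invokes Proposition~\ref{lem_MinY}(ii) to conclude that the two minimization problems have identical feasible sets ${\rm WMin}\mathcal{Y}^{\diamond}\cap\mathcal{Y}\cap(v+\R_{+}^{p})={\rm WMin}\mathcal{Y}^{+}\cap\mathcal{Y}\cap(v+\R_{+}^{p})$, hence equal optimal values.

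Your approach bypasses all of this. You simply take a nearest point $y^{*}\in\mathcal{Y}^{+}$, write $y^{*}=y+r$ with $y\in\mathcal{Y}\subseteq[m,M]$, truncate each coordinate of $y^{*}$ at $M_{j}$, and observe that the truncated point $\tilde{y}$ still dominates $y$ (so $\tilde{y}\in\mathcal{Y}^{+}$), satisfies $\tilde{y}\leq M$ (so $\tilde{y}\in\mathcal{Y}^{\diamond}$), and is no farther from $v$ since $v\leq M$. This is shorter, self-contained, and uses only the box containment $\mathcal{Y}\subseteq[m,M]$ and the definition of $\mathcal{Y}^{+}$; it needs neither the location of projections in $v+\R_{+}^{p}$ nor any facts about ${\rm Min}$ or ${\rm WMin}$. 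The paper's route, by contrast, ties the lemma into the monotonic-optimization framework used elsewhere in the convergence analysis, which may explain the choice even though it is heavier here.
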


\begin{proof}

We consider the distance between a point $v\in\mathcal{P}^{0}\setminus\mathcal{Y}^{\diamond}$
and $\mathcal{Y}^{+}$
\begin{eqnarray*}
d(v,\mathcal{Y}^{+}) & = & \min\{d(v,y)\mid y\in\mathcal{Y}^{+}\}\\
 & = & \min\{d(v,y)\mid y\leq f(x),x\in\mathcal{S}\}.
\end{eqnarray*}

Since $\mathcal{S}$ is compact, an optimal solution of the above
minimization problem always exists. In other words, the distance between
$v$ and $\mathcal{Y}^{+}$ is finite, and therefore there exists
a closed ball $V_{r}(v)$ of radius $r>0$ centered at $v$ satisfying
\begin{eqnarray}
\underset{y\in\mathcal{Y}^{+}}{\min}d(v,y) & = & \underset{y\in\mathcal{Y}^{+}\cap V_{r}(v)}{\min}d(v,y)\nonumber \\
\Leftrightarrow d(v,\mathcal{Y}^{+}) & = & d(v,\mathcal{Y}^{+}\cap V_{r}(v)).\label{eq:lemma_d_equal_1}
\end{eqnarray}

Let $Q$ be a compact conormal set which does not contain $v$, and
$z^{*}$ be the projection of $v$ onto $Q$. Then, there exists a
closed ball $V_{||z^{*}-v||}(v)$ centered at $v$ having $z^{*}$
a boundary point.

Suppose that $z^{*}\notin v+\Bbb R_{+}^{p}$, then there exists a
point $\tilde{z}\neq z^{*},\tilde{z}\notin z^{*}+\Bbb R_{+}^{p}$
satisfying $\tilde{z}\in\mbox{int}V_{||z^{*}-v||}(v)$. Since $Q$
is a conormal set, $\tilde{z}\in Q$. This contradicts the assumption
of $z^{*}$ because $d(v,\tilde{z})<d(v,z^{*})$. Thus,

\begin{equation}
z^{*}\in v+\Bbb R_{+}^{p}.\label{eq:lemma_d_equal_2}
\end{equation}

Now for each $y'\in(v+\Bbb R_{+}^{p})\cap Q$, let $V_{||v-y'||}(v)$
be the closed ball centered at $v$ having a boundary point $y'$.
Since $y'\in v+\Bbb R_{+}^{P}$, it is clear that $((y'+\Bbb R_{+}^{p})\setminus y')\cap V_{||v-y'||}(v)=\emptyset.$
Thus, $d(v,y)>d(v,y'),\,\forall y\in y'+\Bbb R_{+}^{p}.$ Therefore,
$d(v,y)$ is a continuous, increasing function of variable $y$ on
$(v+\Bbb R_{+}^{p})\cap Q$. From \cite{Tuy1999}, its global minimum
is achieved at a nondominated point of the domain. We now apply this
argument twice, with $Q$ replaced by $\mathcal{Y}^{\diamond}$ and
$\mathcal{Y}^{+}\cap V_{r}(v)$. It is worth noting from Proposition
\ref{lem_MinY} that ${\rm Min}\mathcal{Y}^{\diamond}\subseteq{\rm WMin}\mathcal{Y}^{\diamond}\cap\mathcal{Y}$
and ${\rm Min}\mathcal{Y}^{+}\subseteq{\rm WMin}\mathcal{Y}^{+}\cap\mathcal{Y}$.
Combining these facts with \eqref{eq:lemma_d_equal_1} and \eqref{eq:lemma_d_equal_2},
we deduce that the projection of $v$ on $\mathcal{Y}^{\diamond}$
must be the optimal solution of $\min d(v,y)$, subject to $y\in{\rm WMin}\mathcal{Y}^{\diamond}\cap\mathcal{Y}\cap(v+\Bbb R_{+}^{p})$.
Similarly, the projection of $v$ on $\mathcal{Y}^{+}$ must be the
optimal solution of $\min d(v,y)$, subject to $y\in{\rm WMin}\mathcal{Y}^{+}\cap\mathcal{Y}\cap(v+\Bbb R_{+}^{p})$.
It is followed by Proposition \ref{lem_MinY}(ii)
that the feasible domains of the two problems are exactly the same.
The proof is complete.\end{proof}

\begin{lemma}\label{lem:Bk_Ydiamond_space_bounded}

At the $k^{th}$ iteration of the algorithm, let $w_{v}$ be the weakly
nondominated point obtained by solving $(P^{2}(v))$ with some $v\in V^{k}$,
then

\[
d_{H}(\mathcal{P}^{k},\mathcal{Y}^{\diamond})\leq\underset{v\in V^{k}}{\max}||w_{v}-v||.
\]
\end{lemma}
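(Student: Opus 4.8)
Recall that $\mathcal{Y}^{\diamond}\subseteq\mathcal{P}^{k}$ for every $k$, so one side is trivial: every point of $\mathcal{Y}^{\diamond}$ lies in $\mathcal{P}^{k}$, hence $\sup_{y\in\mathcal{Y}^{\diamond}}d(y,\mathcal{P}^{k})=0$. Therefore
\[
d_{H}(\mathcal{P}^{k},\mathcal{Y}^{\diamond})=\sup_{u\in\mathcal{P}^{k}}d(u,\mathcal{Y}^{\diamond}),
\]
and it remains to bound $d(u,\mathcal{Y}^{\diamond})$ for an arbitrary $u\in\mathcal{P}^{k}$ by $\max_{v\in V^{k}}\|w_{v}-v\|$.

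\textbf{Second, I would reduce to the vertices of $\mathcal{P}^{k}$.} Since $\mathcal{P}^{k}=\bigcup_{v\in V^{k}}[v,M]$, any $u\in\mathcal{P}^{k}$ satisfies $u\in[v,M]$ for some proper vertex $v\in V^{k}$, i.e. $u\geq v$. I claim $d(u,\mathcal{Y}^{\diamond})\leq d(v,\mathcal{Y}^{\diamond})$. Indeed, by Lemma \ref{lem:d_equal} (applicable since $v\in\mathcal{P}^{0}\setminus\mathcal{Y}^{\diamond}$ in the nontrivial case; if $v\in\mathcal{Y}^{\diamond}$ the bound is immediate), the nearest point of $\mathcal{Y}^{\diamond}$ to $v$ lies in $v+\R_{+}^{p}$, and more to the point, using that $\mathcal{Y}^{\diamond}$ is conormal one shows the distance function to a conormal set is nonincreasing along the direction $\R_{+}^{p}$: if $y^{*}$ is the projection of $v$, then $y^{*}+(u-v)\in y^{*}+\R_{+}^{p}\subseteq\mathcal{Y}^{\diamond}$ wait — this needs $y^{*}+(u-v)$ to stay in the box $M-\R_{+}^{p}$, which may fail, so instead I argue directly: since $\mathcal{Y}^{\diamond}$ is conormal and $u\geq v$, any ball argument as in the proof of Lemma \ref{lem:d_equal} shows the projection $z^{*}$ of $u$ satisfies $z^{*}\geq u\geq v$, and one checks $d(u,\mathcal{Y}^{\diamond})\le \|u-z^{*}\|\le\|v-z^{*}\|$ is not automatic either; the clean route is: $d(u,\mathcal{Y}^{\diamond})=d(u,\mathcal{Y}^{+})$ by the analogue of Lemma \ref{lem:d_equal}, and monotonicity of $y\mapsto d(u,y)$ on $(u+\R_{+}^{p})\cap\mathcal{Y}^{+}$ combined with $\mathcal{Y}^{+}\supseteq v+\R_{+}^{p}$‑type containment yields $d(u,\mathcal{Y}^{+})\le d(v,\mathcal{Y}^{+})$. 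Thus it suffices to bound $d(v,\mathcal{Y}^{\diamond})$ for each $v\in V^{k}$.

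\textbf{Third, bound $d(v,\mathcal{Y}^{\diamond})$ by $\|w_{v}-v\|$.} By construction (equation \eqref{eq:bar_w} and Lemma \ref{lem_bar_w}) the point $w_{v}=v+t_{v}\hat d$ lies on $\partial\mathcal{Y}^{+}$, and since $v\in[m,M]$ with $w_{v}>v$ one has $w_{v}\in\mathcal{Y}^{\diamond}$ (it is a weakly nondominated point of $\mathcal{Y}^{\diamond}$ as noted in Section \ref{section:the_cutting_cones}). Hence $w_{v}$ is a feasible competitor for the projection of $v$ onto $\mathcal{Y}^{\diamond}$, giving $d(v,\mathcal{Y}^{\diamond})\leq\|w_{v}-v\|$. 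Combining the three steps: for arbitrary $u\in\mathcal{P}^{k}$, pick $v\in V^{k}$ with $u\geq v$, then $d(u,\mathcal{Y}^{\diamond})\le d(v,\mathcal{Y}^{\diamond})\le\|w_{v}-v\|\le\max_{v\in V^{k}}\|w_{v}-v\|$; taking the sup over $u$ finishes the proof.

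\textbf{The main obstacle} is the monotonicity step (Step 2): making rigorous that $d(u,\mathcal{Y}^{\diamond})\le d(v,\mathcal{Y}^{\diamond})$ whenever $u\geq v$. The subtlety is that $\mathcal{Y}^{\diamond}=\mathcal{Y}^{+}\cap(M-\R_{+}^{p})$ is capped by the box, so translating a projection point by $u-v$ can leave the box; the safe fix is to first pass to $\mathcal{Y}^{+}$ via Lemma \ref{lem:d_equal} (which requires checking the lemma's hypothesis $u\in\mathcal{P}^{0}\setminus\mathcal{Y}^{\diamond}$, handling the trivial case $u\in\mathcal{Y}^{\diamond}$ or $v\in\mathcal{Y}^{\diamond}$ separately), and then use that $\mathcal{Y}^{+}$ is genuinely conormal (unbounded in the $\R_{+}^{p}$ direction) so the distance-to-$\mathcal{Y}^{+}$ function is monotone nonincreasing in each coordinate, exactly the kind of increasing/decreasing-function argument already invoked from \cite{Tuy1999} in the proof of Lemma \ref{lem:d_equal}.
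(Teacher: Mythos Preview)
Your three-step outline matches the paper's proof: reduce $d_H$ to the one-sided supremum $\sup_{u\in\mathcal P^k}d(u,\mathcal Y^\diamond)$, reduce that supremum to the proper vertices $v\in V^k$, and then bound $d(v,\mathcal Y^\diamond)$ by $\|w_v-v\|$.

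On Step 2 (reduction to vertices), your route differs from the paper's and is in fact more careful. The paper asserts that $d(\cdot,\mathcal Y^\diamond)$ is a convex function, so its maximum over each box $[z,M]$ is attained at the vertex $z$. But convexity of the distance function requires convexity of $\mathcal Y^\diamond$, and the paper itself stresses that $\mathcal Y^+$ (hence $\mathcal Y^\diamond$) is in general nonconvex; moreover, convexity alone would only locate the maximum at \emph{some} vertex of $[z,M]$, not specifically at $z$. Your fix---pass to $\mathcal Y^+$ via Lemma~\ref{lem:d_equal} and use that $\mathcal Y^+$ is upward closed, so for $u\ge v$ the translate $y^*+(u-v)$ of the projection $y^*$ of $v$ stays in $\mathcal Y^+$ and witnesses $d(u,\mathcal Y^+)\le d(v,\mathcal Y^+)$---is the right argument, and it uses exactly the conormal structure rather than an unavailable convexity.

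Your Step 3, however, has a gap. You assert $w_v\in\mathcal Y^\diamond$ (citing Section~\ref{section:the_cutting_cones}), but $w_v=v+t_v\hat d$ is only guaranteed to lie in $\mathcal Y^+$; there is no reason $w_v\le M$, and indeed Lemma~\ref{lem:d'} is in the paper precisely because $w_v$ may only lie in a larger box $[m,M']$ with $M'>M$. The paper handles this cleanly by combining $w_v\in\mathcal Y^+$ with Lemma~\ref{lem:d_equal}: for $v\in V^k\setminus\mathcal Y^\diamond$ one has $d(v,\mathcal Y^\diamond)=d(v,\mathcal Y^+)\le\|w_v-v\|$. Since you already invoke Lemma~\ref{lem:d_equal} in Step 2, you should use it again here instead of claiming $w_v\in\mathcal Y^\diamond$; with that change your argument is complete.
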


\begin{proof}Since $V^{k}$ contains all vertices of $\mathcal{P}^{k}$,
it is obvious that

\[
\max\{d(v,\mathcal{Y}^{\diamond})\mid v\in\mathcal{P}^{k}\}=\underset{z\in V^{k}}{\max}\{\max\{d(v,\mathcal{Y}^{\diamond})\mid v\in(z+\R_{+}^{p})\cap(M-\R_{+}^{p})\}\}.
\]
Because $d(v,\mathcal{Y}^{\diamond})$ is a convex function and the
box $(z+\R_{+}^{p})\cap(M-\R_{+}^{p})$ is convex,
\[
\max\{d(v,\mathcal{Y}^{\diamond})\mid v\in(z+\R_{+}^{p})\cap(M-\R_{+}^{p})\}=d(z,\mathcal{Y}^{\diamond}).
\]
Therefore, 
\[
d_{H}(\mathcal{P}^{k},\mathcal{Y}^{\diamond})=\underset{v\in\mathcal{P}^{k}}{\max}d(v,\mathcal{Y}^{\diamond})=\underset{v\in V^{k}}{\max}d(v,\mathcal{Y}^{\diamond}).
\]
Since $w_{v}\in\mathcal{Y}^{+}$ for all $v\in V^{k}$ and from the
result of Lemma \ref{lem:d_equal}, we have 
\[
\underset{v\in V^{k}}{\max}d(v,\mathcal{Y}^{\diamond})=\underset{v\in V^{k}}{\max}d(v,\mathcal{Y}^{+})\leq\underset{v\in V^{k}}{\max}||w_{v}-v||.
\]
This completes the proof.\end{proof}

\begin{lemma}

\label{lem:d'}

For each $v\in\mathcal{P}^{0}\setminus\mathcal{Y}^{\diamond}$, there
exists a point $M'>M$ such that the weakly nondominated point $w_{v}$
of $\mathcal{Y}^{+}$ obtained by solving $(P^{2}(v))$ lies in box
$[m,M']$.\end{lemma}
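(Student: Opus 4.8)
The plan is to squeeze $w_{v}=v+t_{v}\hat{d}$ between $m$ and an explicit point lying strictly above $M$, the upper bound coming from a bound on the optimal value $t_{v}$ of $(P^{2}(v))$ that is in fact uniform over all admissible $v$.

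For the lower bound I would not even examine the sign of $t_{v}$: by feasibility of $(P^{2}(v))$ (equivalently of $(P^{0}(v))$, via Lemma~\ref{lem_equiv}) the point $w_{v}$ lies in $\mathcal{Y}^{+}=\mathcal{Y}+\R_{+}^{p}$, and since $\mathcal{Y}\subseteq[m,M]$ we have $\mathcal{Y}^{+}\subseteq m+\R_{+}^{p}$, whence $w_{v}\geq m$. For the upper bound, recall that $(P^{2}(v))$ attains its optimum because $\mathcal{S}$ is compact and $f$ is continuous, and that $M$ was constructed so that $f_{j}(x)\leq M_{j}$ for every $x\in\mathcal{S}$ and every $j$. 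Fixing any $x_{0}\in\mathcal{S}$ and using $v\in[m,M]$, i.e.\ $v_{j}\geq m_{j}$, optimality gives
\[
t_{v}\;=\;\min_{x\in\mathcal{S}}\ \max_{1\leq j\leq p}\frac{f_{j}(x)-v_{j}}{\hat{d}_{j}}\;\leq\;\max_{1\leq j\leq p}\frac{f_{j}(x_{0})-v_{j}}{\hat{d}_{j}}\;\leq\;\max_{1\leq j\leq p}\frac{M_{j}-m_{j}}{\hat{d}_{j}}\;=:\;T,
\]
a quantity depending only on $m$, $M$, $\hat{d}$. Since $\hat{d}>0$ and $v\leq M$, this yields $w_{v}=v+t_{v}\hat{d}\leq M+T\hat{d}$.

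It then suffices to take $M':=M+(T+1)\hat{d}$. Because $M\geq m$ and $\hat{d}>0$ we have $T\geq 0$, so $M'>M$ componentwise; and $w_{v}\leq M+T\hat{d}<M'$, so together with $w_{v}\geq m$ we conclude $w_{v}\in[m,M']$. In particular this single $M'$ works simultaneously for all $v\in\mathcal{P}^{0}\setminus\mathcal{Y}^{\diamond}$, slightly more than is asked. I do not expect a genuine obstacle here; the only point deserving attention is that the estimate on $t_{v}$ be independent of $v$, which is exactly what the inequality $v_{j}\geq m_{j}$ --- valid for every $v\in[m,M]$ --- provides. One could instead argue directly on $(P^{0}(v))$: any $v+s\hat{d}$ with $v+s\hat{d}\geq M$ is feasible, since then $v+s\hat{d}\geq M\geq f(x)$ for any $x\in\mathcal{S}$, which again forces $t_{v}\leq T$; the Chebyshev reformulation $(P^{2}(v))$ merely makes the dependence on $v$ most transparent.
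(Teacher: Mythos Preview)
Your proof is correct and follows essentially the same strategy as the paper: obtain a uniform upper bound on $t_{v}$ over all admissible $v$, then conclude $w_{v}=v+t_{v}\hat d\le M+(\text{bound})\hat d$ and $w_{v}\ge m$. The only cosmetic difference is the choice of bound: the paper observes that the feasible set of $(P^{1}(m))$ is contained in that of $(P^{1}(v))$ whenever $m\le v$, hence $t_{v}\le t_{m}$ and one may take $M'=M+t_{m}\hat d$; you instead bound $t_{v}$ directly by $T=\max_{j}(M_{j}-m_{j})/\hat d_{j}$ and add $1$ to force $M'>M$.
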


\begin{proof}

Denote $\mathcal{N}_{d}(Q):=(Q+\R_{+}^{p})\cap(d-\R_{+}^{p})$ the
conormal hull of $Q$ in the box $[m,d]$ where $Q$ is some set contained
in the box $[m,M]$. Let us rewrite \ref{eq:P_1_v}, the equivalent
problem to \ref{eq:Pbarv}, in this form 
\[
\begin{array}{cc}
\min & t\\
{\rm s.t.} & f(x)\in\mathcal{N}_{v+t\hat{d}}(\mathcal{Y}^{+}),\\
 & x\in\mathcal{S},\;t\geq0.
\end{array}
\]

We denote $t_{m}$ and $t_{v}$ to be the optimal values of $(P^{2}(m))$
and $(P^{2}(v))$, respectively. The existence of these values has
been proved in Lemma \ref{lem_bar_w}. Since $m<v$ for all $v\in\mathcal{P}^{0}\setminus\mathcal{Y}^{\diamond}$,
$\{(x,t)\mid f(x)\in\mathcal{N}_{m+t\hat{d}}(\mathcal{Y}^{+}),x\in\mathcal{S},t\geq0\}$
the feasible domain of $(P^{2}(m))$ is a subset of $\{(x,t)\mid f(x)\in\mathcal{N}_{v+t\hat{d}}(\mathcal{Y}^{+}),x\in\mathcal{S},t\geq0\}$
the feasible domain of any $v\in\mathcal{P}^{0}\setminus\mathcal{Y}^{\diamond}$.
Therefore, the relation $t_{v}\leq t_{m}$ holds for such $v$. We
can therefore write $w_{v}=v+t_{v}\hat{d}\leq v+t_{m}\hat{d}\leq M+t_{m}\hat{d}=M'$.

Moreover, it is obvious that $m\leq v\leq w_{v}$, which completes
the proof.\end{proof}

\begin{lemma}\label{lem:lim_v_wv}For $\varepsilon=0$ either of two
following statements is true.

i) There exists a finite number $k$ such that 
\[
\underset{v\in V^{k}}{\max}||w_{v}-v||=0;
\]

ii) The number $k$ tends to infinity and 
\[
\underset{k\rightarrow\infty}{\lim}\underset{v\in V^{k}}{\max}||w_{v}-v||=0,
\]
where $V^{k}$ is the set of all proper vertices determining $\mathcal{P}^{k}$
and $w_{v}$ is the corresponding weakly nondominated point of $\mathcal{Y}^{+}$
obtained by solving $(P^{2}(v))$.

\end{lemma}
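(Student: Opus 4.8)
The plan is to run a standard exhaustiveness argument for outer-approximation schemes: either the \textbf{while} loop terminates after finitely many iterations (case i), or it runs forever and we must show that the ``worst gap'' $\max_{v\in V^k}\|w_v-v\|$ goes to zero along the infinite sequence of iterations (case ii). Case (i) is immediate from the definition of the algorithm with $\varepsilon=0$: if the loop terminates at some finite $k=K$, then $V^K\setminus V_\varepsilon=\emptyset$, every vertex $v\in V^K$ was added to $V_\varepsilon$ through the test $\|w^k-v^k\|\le\epsilon=0$, hence $w_v=v$ for all $v\in V^K$ and the maximum is $0$. So the real content is in case (ii).

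First I would set up notation along an infinite run: the algorithm picks $v^k\in V^k\setminus V_\varepsilon$, computes $w^k=w_{v^k}=v^k+t_k\hat d$ with $t_k>0$ (since $v^k\notin\mathcal{Y}^\diamond$, so $w^k\ne v^k$), and replaces $v^k$ in the vertex set by the $p$ children $v^k-(w^k_i-v^k_i)e^i$, $i=1,\dots,p$, after pruning improper vertices via Procedure \textit{RIV}. By Lemma~\ref{lem:d'} every $w^k$ lies in the fixed box $[m,M']$, so the whole process takes place in a compact set; in particular the sequence $\{v^k\}$ has a convergent subsequence, say $v^{k_\ell}\to\bar v$, and correspondingly (using boundedness of $t_k$ and continuity arguments, or by passing to a further subsequence) $w^{k_\ell}\to\bar w$ with $\bar w=\bar v+\bar t\hat d$, $\bar t\ge 0$. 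The key geometric observation is that the children of $v^k$ are all coordinatewise $\le v^k$ but strictly dominated only in one coordinate each, and a vertex chosen at iteration $k_\ell$ cannot ``reappear'': once $v^{k}$ is processed its cutting cone $\mathcal{C}(w^k)=w^k-\R^p_+$ is removed from all subsequent copolyblocks, so no later vertex can lie in the open set $v^k-\operatorname{int}\R^p_+$ that contains a neighbourhood of points strictly below $w^k$. This is exactly the nestedness $\mathcal{P}^{k+1}\subset\mathcal{P}^k$ combined with $w^k\in\partial\mathcal{Y}^\diamond$.

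The heart of the proof, then, is to show $\bar t=0$, i.e.\ $\bar v=\bar w\in\partial\mathcal{Y}^\diamond$. Suppose not, so $\bar w-\bar v=\bar t\hat d$ with $\bar t>0$. For $\ell$ large, $v^{k_\ell}$ is within distance $\tfrac13\bar t\|\hat d\|$ of $\bar v$ and $w^{k_\ell}$ within $\tfrac13\bar t\|\hat d\|$ of $\bar w$, so $\|w^{k_\ell}-v^{k_\ell}\|\ge \tfrac13\bar t\|\hat d\|$. Pick two indices $\ell<\ell'$ from the subsequence. The vertex $v^{k_{\ell'}}$ survives in $\mathcal{P}^{k_{\ell'}}\subseteq\mathcal{P}^{k_\ell+1}$, which was obtained from $\mathcal{P}^{k_\ell}$ by cutting off $\mathcal{C}(w^{k_\ell})=w^{k_\ell}-\R^p_+$; hence $v^{k_{\ell'}}\notin w^{k_\ell}-\operatorname{int}\R^p_+$, i.e.\ there is a coordinate $j$ with $v^{k_{\ell'}}_j\ge w^{k_\ell}_j$. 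But $v^{k_{\ell'}}$ is close to $\bar v$ and $w^{k_\ell}$ is close to $\bar w=\bar v+\bar t\hat d>\bar v$, so for $\ell,\ell'$ large enough every coordinate satisfies $v^{k_{\ell'}}_j<w^{k_\ell}_j$ (taking the tolerance below $\tfrac12\bar t\min_j\hat d_j$), a contradiction. Therefore $\bar t=0$ and $\|w^{k_\ell}-v^{k_\ell}\|\to 0$ along the subsequence.

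Finally I would upgrade this to convergence of the full quantity $\delta_k:=\max_{v\in V^k}\|w_v-v\|$. Here I would argue that $\{\delta_k\}$ is, up to the effect of pruning, nonincreasing in the relevant sense: the children $v^k-(w^k_i-v^k_i)e^i$ of the processed vertex $v^k$ each satisfy $\|w_{v}-v\|\le\|w^k-v^k\|$ — because moving down in a single coordinate from $v^k$ to a child can only shrink the ``gap'' to $\mathcal{Y}^+$ (using Lemma~\ref{lem:d_equal}, $\|w_v-v\|=d(v,\mathcal{Y}^+)$, and $d(\cdot,\mathcal{Y}^+)$ is monotone and $1$-Lipschitz, so $d(v,\mathcal{Y}^+)\le d(v^k,\mathcal{Y}^+)$ for $v\le v^k$) — so $\delta_{k+1}\le\delta_k$. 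A bounded nonincreasing sequence converges, and the subsequential limit $0$ found above forces $\lim_{k\to\infty}\delta_k=0$, which is exactly statement (ii). The main obstacle I anticipate is making the non-reappearance / separation step fully rigorous: one must carefully use the cutting-cone property from \cite{TuyThach2005} together with the explicit vertex-update formula of Proposition~\ref{prop_newV} to guarantee that a vertex persisting across iterations $k_\ell$ and $k_{\ell'}$ cannot be strictly dominated by $w^{k_\ell}$, and to control the interaction with Procedure \textit{RIV}'s pruning of improper vertices; the rest is compactness and monotonicity bookkeeping.
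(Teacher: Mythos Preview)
Your route is genuinely different from the paper's. The paper does not argue by compactness and separation at all: it runs a volume-counting argument in the enlarged box $[m,M']$ of Lemma~\ref{lem:d'}, observing that the box $[v^k,w^k]$ has volume $(t_k)^p{\rm Vol}[0,\hat d]$ and is removed in passing from $\hat{\mathcal P}^k$ to $\hat{\mathcal P}^{k+1}$, so that $\sum_k t_k^p\le {\rm Vol}[m,M']/{\rm Vol}[0,\hat d]<\infty$ and hence $t_k\|\hat d\|\to 0$.

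Your separation step has a real gap, more serious than the ``bookkeeping'' you flag at the end. You claim that since $v^{k_{\ell'}}\in\mathcal{P}^{k_{\ell'}}\subseteq\mathcal{P}^{k_\ell+1}$, and $\mathcal{P}^{k_\ell+1}$ was obtained by cutting with the cone $w^{k_\ell}-\R_+^p$, one must have $v^{k_{\ell'}}\notin w^{k_\ell}-{\rm int}\,\R_+^p$. But the cut in Proposition~\ref{prop_newV} only replaces the single box $[v^{k_\ell},M]$ by its $p$ children; the remaining boxes $[v',M]$, $v'\in V^{k_\ell}\setminus\{v^{k_\ell}\}$, are untouched, and such a $v'$ may well satisfy $v'<w^{k_\ell}$. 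In $\R^2$, take proper vertices $v^{k_\ell}=(0,0)$ and $v'=(0.01,-0.01)$ with $w^{k_\ell}=(1,1)$: after the cut $v'$ survives as a proper vertex of $\mathcal{P}^{k_\ell+1}$, yet $v'<w^{k_\ell}$, and the algorithm is free to choose $v^{k_\ell+1}=v'$. So two consecutively chosen vertices can both lie as close to $\bar v$ as you like while each is strictly dominated by the other's $w$-point; the contradiction you want does not follow.

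Two smaller issues. First, Lemma~\ref{lem:d_equal} asserts $d(v,\mathcal{Y}^\diamond)=d(v,\mathcal{Y}^+)$, not $\|w_v-v\|=d(v,\mathcal{Y}^+)$; the ray length along $\hat d$ is in general strictly larger than the Euclidean distance, so your justification of $\delta_{k+1}\le\delta_k$ is invalid as written (though the inequality itself is true via the direct argument $c\ge v^k\Rightarrow c+t_k\hat d\ge w^k\in\mathcal{Y}^+\Rightarrow t_c\le t_k$). Second, even granting monotonicity, the subsequential limit you produce is $\|w^{k_\ell}-v^{k_\ell}\|\to 0$ for the \emph{chosen} vertex, whereas you need $\delta_{k_\ell}=\max_{v\in V^{k_\ell}}\|w_v-v\|\to 0$; these coincide only if the algorithm always selects a maximizing vertex, which Algorithm~\textit{Solve(QVP)} does not require.
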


\begin{proof}Let $\hat{\mathcal{P}}^{0}=[m,M'],$ with $M'=M+t_{m}\hat{d}$.
At the $k^{th}$ iteration, we can also determine a copolyblock $\hat{\mathcal{P}}^{k+1}$
in box $[m,M']$ along with $\mathcal{P}^{k+1}$. It is clear that
$\mathcal{P}^{k}\subseteq\hat{\mathcal{P}}^{k}$ and $\hat{\mathcal{P}}^{k+1}\subseteq\hat{\mathcal{P}}^{k}$
for any $k\geq0$. From Lemma \ref{lem:d'}, $w_{v}\in\hat{\mathcal{P}}^{k}$,
for each $v\in\mathcal{P}^{k}\setminus\mathcal{Y}^{\diamond}$. Now
consider $v^{k}\in\mathcal{P}^{k}$ chosen at the $k^{th}$ iteration
and $t_{k}$ the optimal values of $(P^{2}(v^{k}))$. As before, let
$w_{v^{k}}=v^{k}+t_{k}\hat{d}$. We have
\begin{equation}
\mbox{Vol}[v^{k},w_{v^{k}}]=(t_{k})^{p}\mbox{Vol}[0,\hat{d}].\label{eq:lem_lim_v_wv_2}
\end{equation}
The lemma holds if $\underset{v\in V^{k}}{\max}||w_{v}-v||=0$ at
some $k\geq0$. Otherwise, there exists $v^{k}\in V^{k}$ such that
$||w_{v^{k}}-v^{k}||=\underset{v\in V^{k}}{\max}||w_{v}-v||>0$. We
also have $\mathcal{P}^{k+1}\subseteq\mathcal{P}^{k}\setminus(v^{k}-{\rm int}\Bbb R_{+}^{p})$,
noting that the equality holds when no improper vertex appears during
the cut. Since $[v^{k},w_{v^{k}}]\subseteq\hat{\mathcal{P}}^{k}$
followed by the definition of $w_{v^{k}}$, the volume of $\hat{\mathcal{P}}^{k}$
satisfies 
\begin{equation}
\mbox{Vol}\hat{\mathcal{P}}^{k}-\mbox{Vol}\hat{\mathcal{P}}^{k+1}\geq\mbox{Vol}[v^{k},w_{v^{k}}].\label{eq:lem_lim_v_wv_1}
\end{equation}
Combining \eqref{eq:lem_lim_v_wv_1} with \eqref{eq:lem_lim_v_wv_2},
we obtain 
\[
\mbox{Vol}\hat{\mathcal{P}}^{k}-\mbox{Vol}\hat{\mathcal{P}}^{k+1}\geq(t_{k})^{p}\mbox{Vol}[0,\hat{d}].
\]
Therefore, 
\[
\begin{array}{ccc}
\sum_{i=0}^{k}(\mbox{Vol}\hat{\mathcal{P}}^{i}-\mbox{Vol}\hat{\mathcal{P}}^{i+1}) & \geq & \left(\sum_{i=0}^{k}(t_{i})^{p}\right)\mbox{Vol}[0,\hat{d}]\end{array}.
\]
We deduce 
\[
\mbox{Vol}\hat{\mathcal{P}}^{0}\geq\left(\sum_{i=0}^{k}(t_{i})^{p}\right)\mbox{Vol}[0,\hat{d}]+\mbox{Vol}\hat{\mathcal{P}}^{k+1}\geq\left(\sum_{i=0}^{k}(t_{i})^{p}\right)\mbox{Vol}[0,\hat{d}],
\]
for all $k\geq1$. Thus, by letting $k\rightarrow\infty$, the positive
series $\sum_{i=0}^{\infty}(t_{i})^{p}$ is upper bounded by $\mbox{Vol}\hat{\mathcal{P}}^{0}/\mbox{Vol}[0,\hat{d}],$
and therefore converges. Since $||\hat{d}||$ is bounded, for any
$i\geq1$, we have

\[
\lim_{i\rightarrow\infty}\underset{v\in V^{i}}{\max}||w_{v}-v||=\lim_{i\rightarrow\infty}||w_{v^{i}}-v^{i}||=\lim_{i\rightarrow\infty}t_{i}||\hat{d}||=0.
\]
\end{proof}

\begin{lemma}\label{lem:ydiamond_convergence}With any $\varepsilon\geq0$,
the sets $\mathcal{P}^{k}$ with $k\geq0$ satisfy $\mathcal{Y}^{\diamond}\subseteq\mathcal{P}^{k+1}\subseteq\mathcal{P}^{k}.$
Moreover, by setting $\varepsilon=0$, we have 
\begin{equation}
\begin{aligned}\mathcal{Y}^{\diamond} & = & \lim_{k\rightarrow\infty}\mathcal{P}^{k} & = & \bigcap_{k\geq1}\mathcal{P}^{k},\\
{\rm WMin}\mathcal{Y}^{\diamond} & = & \lim_{k\rightarrow\infty}{\rm WMin}\mathcal{P}^{k}.
\end{aligned}
\label{eq:lem_ydiamond_convergence_0}
\end{equation}

\end{lemma}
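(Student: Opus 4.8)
The plan is to establish the three assertions in two groups. First, the nesting $\mathcal{Y}^{\diamond}\subseteq\mathcal{P}^{k+1}\subseteq\mathcal{P}^{k}$ for all $k\geq 0$ holds for \emph{any} $\varepsilon\geq 0$, because each $\mathcal{P}^{k+1}$ is obtained from $\mathcal{P}^{k}$ either by leaving it unchanged (when $v^{k}$ is declared $\varepsilon$-approximate) or by applying the cutting-cone step of Proposition \ref{prop_newV} at the point $w^{k}\in\partial\mathcal{Y}^{\diamond}$. Since the cutting cone $\mathcal{C}(w^{k})=w^{k}-\R_{+}^{p}$ separates $v^{k}$ strictly from $\mathcal{Y}^{\diamond}$ (by the argument recalled before Proposition \ref{prop_newV}, using $w^{k}>v^{k}$), removing $v^{k}-\mathrm{int}\,\R_{+}^{p}$ cuts away only points not in $\mathcal{Y}^{\diamond}$; hence $\mathcal{Y}^{\diamond}\subseteq\mathcal{P}^{k+1}$, and $\mathcal{P}^{k+1}\subseteq\mathcal{P}^{k}$ is immediate from the construction. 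This part is routine and needs only a careful invocation of the earlier results; I would do it first and by induction on $k$, with base case $\mathcal{Y}^{\diamond}\subseteq\mathcal{P}^{0}=[m,M]$ established in Section 2.1.

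Next, for $\varepsilon=0$, I would prove $\mathcal{Y}^{\diamond}=\lim_{k\to\infty}\mathcal{P}^{k}=\bigcap_{k\geq 1}\mathcal{P}^{k}$. The set equality $\bigcap_{k}\mathcal{P}^{k}\supseteq\mathcal{Y}^{\diamond}$ is the nesting just shown. For the reverse inclusion and for Hausdorff convergence simultaneously, the key is Lemma \ref{lem:Bk_Ydiamond_space_bounded}, which gives $d_{H}(\mathcal{P}^{k},\mathcal{Y}^{\diamond})\leq\max_{v\in V^{k}}\|w_{v}-v\|$, combined with Lemma \ref{lem:lim_v_wv}, which (for $\varepsilon=0$) asserts this bound is either eventually zero or tends to zero. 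In case (i) the algorithm terminates at some finite $K$ with $\mathcal{P}^{K}=\mathcal{Y}^{\diamond}$ exactly, so the limit and the intersection both stabilize to $\mathcal{Y}^{\diamond}$. In case (ii), $d_{H}(\mathcal{P}^{k},\mathcal{Y}^{\diamond})\to 0$ gives $\lim_{k}\mathcal{P}^{k}=\mathcal{Y}^{\diamond}$ directly; and since the $\mathcal{P}^{k}$ are nested compact sets, their intersection equals their Hausdorff limit, which is $\mathcal{Y}^{\diamond}$.

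Finally, for the weakly nondominated sets, I would argue $\mathrm{WMin}\,\mathcal{P}^{k}\to\mathrm{WMin}\,\mathcal{Y}^{\diamond}$ in Hausdorff distance. One direction is easy: since each $\mathcal{P}^{k}$ is a copolyblock in $[m,M]$ containing $\mathcal{Y}^{\diamond}$ with $\mathcal{P}^{k}\to\mathcal{Y}^{\diamond}$, every point of $\mathrm{WMin}\,\mathcal{Y}^{\diamond}$ is within $d_{H}(\mathcal{P}^{k},\mathcal{Y}^{\diamond})$ of $\mathcal{P}^{k}$, and using conormality one can push such an approximant onto $\mathrm{WMin}\,\mathcal{P}^{k}$ without increasing the distance by more than a controlled amount. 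The converse direction — that every weakly nondominated vertex of $\mathcal{P}^{k}$ is close to $\mathrm{WMin}\,\mathcal{Y}^{\diamond}$ — follows because for each proper vertex $v\in V^{k}$ the point $w_{v}\in\mathrm{WMin}\,\mathcal{Y}^{+}$ (Lemma \ref{lem_bar_w}), in fact $w_{v}\in\mathrm{WMin}\,\mathcal{Y}^{\diamond}$ by Lemma \ref{lem:d'} and Proposition \ref{lem_MinY}, and $\|w_{v}-v\|\leq\max_{v\in V^{k}}\|w_{v}-v\|\to 0$; since $\mathrm{WMin}\,\mathcal{P}^{k}$ is covered by $\bigcup_{v\in V^{k}}(v+\partial(\text{box}))$ one reduces to the vertices. \textbf{The main obstacle} is this last point: making precise the claim that $\mathrm{WMin}\,\mathcal{P}^{k}$ is uniformly approximated by $V^{k}$ (not just its proper vertices, but the whole weakly nondominated boundary of the copolyblock) and that conormality lets one transfer Hausdorff estimates between a conormal set and its weakly nondominated boundary — this requires the monotonic-analysis fact that for compact conormal sets $Q_{1}\subseteq Q_{2}$ in $[m,M]$ one has $d_{H}(\mathrm{WMin}\,Q_{1},\mathrm{WMin}\,Q_{2})\leq c\,d_{H}(Q_{1},Q_{2})$ for a constant depending only on the box geometry and $\hat d$, which I would either cite from \cite{Tuy2000}/\cite{TuyThach2005} or prove by the projection-along-$\hat d$ argument used in Lemma \ref{lem_bar_w}.
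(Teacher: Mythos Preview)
Your treatment of the nesting $\mathcal{Y}^{\diamond}\subseteq\mathcal{P}^{k+1}\subseteq\mathcal{P}^{k}$ and of the convergence $\mathcal{P}^{k}\to\mathcal{Y}^{\diamond}$ via Lemmas~\ref{lem:Bk_Ydiamond_space_bounded} and~\ref{lem:lim_v_wv} is exactly the paper's argument, only spelled out in more detail. The divergence is in the ${\rm WMin}$ part. You try to control $d_{H}({\rm WMin}\,\mathcal{P}^{k},{\rm WMin}\,\mathcal{Y}^{\diamond})$ directly by approximating boundary points of $\mathcal{P}^{k}$ through the vertices $V^{k}$ and then invoking a Lipschitz-type estimate $d_{H}({\rm WMin}\,Q_{1},{\rm WMin}\,Q_{2})\leq c\,d_{H}(Q_{1},Q_{2})$ that you would have to import or prove separately; you correctly flag this as the main obstacle, and note also that your intermediate claim $w_{v}\in{\rm WMin}\,\mathcal{Y}^{\diamond}$ is not quite right, since Lemma~\ref{lem:d'} only places $w_{v}$ in the larger box $[m,M']$. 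The paper sidesteps all of this with a single structural observation: for any compact conormal set $Q\subseteq[m,M]$ one has ${\rm WMin}\,Q=\partial Q^{+}\cap(M-\R_{+}^{p})$. Applying this to both $\mathcal{P}^{k}$ and $\mathcal{Y}^{\diamond}$ identifies the two weakly nondominated sets with slices of $\partial\mathcal{P}^{k+}$ and $\partial\mathcal{Y}^{+}$, and then the one-sided Hausdorff distance (which is all that is needed, since $\mathcal{Y}^{+}\subseteq\mathcal{P}^{k+}$) is bounded by $\max_{v\in V^{k}}d(v,\mathcal{Y}^{\diamond})$ via Lemma~\ref{lem:d_equal}. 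This buys a self-contained argument with no external Lipschitz lemma and no need to reduce the full boundary ${\rm WMin}\,\mathcal{P}^{k}$ to its vertex set; your route would work in principle but requires substantially more machinery to close the gap you identified.
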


\begin{proof}

The proof falls naturally into three parts. From the formulation of
$\mathcal{P}^{k},k\geq0$, the first part of the lemma is immediate.
We deduce directly from Lemma \ref{lem:Bk_Ydiamond_space_bounded}
and Lemma \ref{lem:lim_v_wv} that
\[
\lim_{k\rightarrow\infty}d_{H}(\mathcal{P}^{k},\mathcal{Y}^{\diamond})\leq\lim_{k\rightarrow\infty}\max_{v\in V^{k}}||w_{v}-v||=0.
\]
Therefore, $\{\mathcal{P}^{k}\}_{k\geq0}$ converges to $\mathcal{Y}^{\diamond}$
when $k$ goes to infinity.

Since $\mathcal{P}^{k+1}\subseteq\mathcal{P}^{k}$ for any $k\geq0$,
it follows easily that $\lim{}_{k\rightarrow\infty}\mathcal{P}^{k}=\bigcap_{k\geq1}\mathcal{P}^{k}$
. What is left is to prove the second equation of \eqref{eq:lem_ydiamond_convergence_0}.
Let $Q$ be a copolyblock in box $[m,M]$, and recall the notation
$Q^{+}=Q+\Bbb R_{+}^{p}$, we first prove 
\begin{equation}
{\rm WMin}Q=\partial Q^{+}\cap(M-\Bbb R_{+}^{p}).\label{eq:lem_ydiamond_convergence_1}
\end{equation}
If $\bar{w}\in{\rm WMin}Q$ then of course $\bar{w}\in(M-\Bbb R_{+}^{P})$.
Since the cone $\bar{w}-{\rm int}\Bbb R_{+}^{p}\nsubseteq Q^{+}$,
it is clear that $\bar{w}\in\partial Q^{+}$. This yields ${\rm WMin}Q\subseteq\partial Q^{+}\cap(M-\Bbb R_{+}^{p})$.
Conversely, if $\bar{w}\in\partial Q^{+}\cap(M-\Bbb R_{+}^{p})$.
By the property of a copolyblock, $\bar{w}-{\rm int}\Bbb R_{+}^{p}\nsubseteq Q^{+}$.
This also means that $\bar{w}$ is a weakly nondominated point of
$Q^{+}$. Because $\bar{w}<M$, we also have $\bar{w}\in{\rm WMin}Q$.
\eqref{eq:lem_ydiamond_convergence_1} is proved.\\
Applying \eqref{eq:lem_ydiamond_convergence_1} on $\mathcal{Y}^{\diamond}$
gives 
\begin{equation}
{\rm WMin}\mathcal{Y}^{\diamond}=\partial\mathcal{Y}^{+}\cap(M-\Bbb R_{+}^{p}).\label{eq:lem_ydiamond_convergence_3}
\end{equation}
Let $\partial\mathcal{P}^{k+}$ be the boundary set of $\mathcal{P}^{k+}:=\mathcal{P}^{k}+\R_{+}^{p}$.
We now apply \eqref{eq:lem_ydiamond_convergence_1} again, with $\mathcal{Y}^{\diamond}$
replaced by $\mathcal{P}^{k+}$, to obtain

\begin{equation}
{\rm WMin}\mathcal{P}^{k}=\partial\mathcal{P}^{k+}\cap(M-\Bbb R_{+}^{p})\subseteq\mathcal{P}^{k}.\label{eq:lem_ydiamond_convergence_2}
\end{equation}
Moreover, since $\mathcal{Y}^{+}\subseteq\mathcal{P}^{k+}$, 
\[
d_{H}(\partial\mathcal{P}^{k+}\cap(M-\Bbb R_{+}^{p}),\partial\mathcal{Y}^{+}\cap(M-\Bbb R_{+}^{p}))
\]
\[
=\max\{d(v,\partial\mathcal{Y}^{+}\cap(M-\Bbb R_{+}^{p}))\mid v\in\partial\mathcal{P}^{k+}\cap(M-\Bbb R_{+}^{p})\}.
\]
 From Lemma \ref{lem:d_equal}, for each $v$ inside the box $[m,M]$
which does not belong to $\mathcal{Y}^{\diamond}$, we have 
\[
d(v,\mathcal{Y}^{\diamond})=d(v,\mathcal{Y}^{+})=d(v,\partial\mathcal{Y}^{+})=d(v,\partial\mathcal{Y}^{+}\cap(M-\Bbb R_{+}^{p})).
\]
Thus,
\[
d_{H}(\partial\mathcal{P}^{k+}\cap(M-\Bbb R_{+}^{p}),\partial\mathcal{Y}^{+}\cap(M-\Bbb R_{+}^{p}))\leq\max_{v\in V^{k}}d(v,\mathcal{Y}^{\diamond}).
\]
From \eqref{eq:lem_ydiamond_convergence_1}, \eqref{eq:lem_ydiamond_convergence_2}
and the first equation of \eqref{eq:lem_ydiamond_convergence_0},
it follows that $\lim_{k\rightarrow\infty}{\rm WMin}\mathcal{P}^{k}={\rm WMin}\mathcal{Y}^{\diamond}$
which is the desired conclusion.\end{proof}

Consider the sets $\mathcal{Y}_{WN}$ and $V_{\varepsilon}$ obtained
from the algorithm. Recall that $\mathcal{L}=\mathcal{N}(\mathcal{Y}_{WN})$
and $\mathcal{U}=\mathcal{N}(V_{\varepsilon}).$ The following assertion
shows that these sets are the inner and outer approximate sets of
$\mathcal{Y}^{\diamond},$ respectively.

\begin{theorem}\label{theorem:yin_yout}

Let $\varepsilon=\epsilon e$, where $e$ denotes the vector of ones
in $\Bbb R^{p}$. We have the following properties:

i) $\mathcal{L}\subseteq\mathcal{Y}^{\diamond}\subseteq\mathcal{U}$;

ii) ${\rm WMin}\mathcal{Y}^{\diamond}\subseteq\mathcal{U}_{\varepsilon}\cap\mathcal{Y}^{\diamond}\subseteq\mathcal{Y}_{\varepsilon}^{\diamond}$;

iii) ${\rm WMin}\mathcal{L}\subseteq\mathcal{Y}_{\varepsilon}^{\diamond}$.

\end{theorem}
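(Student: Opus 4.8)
The three parts concern the inner set $\mathcal{L}=\mathcal{N}(\mathcal{Y}_{WN})$ and the outer set $\mathcal{U}=\mathcal{N}(V_\varepsilon)=\mathcal{P}^K$ produced at termination. For part (i), the inclusion $\mathcal{Y}^\diamond\subseteq\mathcal{U}$ is immediate from Lemma~\ref{lem:ydiamond_convergence}, since $\mathcal{U}=\mathcal{P}^K$ and the copolyblocks are nested with $\mathcal{Y}^\diamond\subseteq\mathcal{P}^k$ for every $k$. For $\mathcal{L}\subseteq\mathcal{Y}^\diamond$, I would argue that every point $f(x^k)$ added to $\mathcal{Y}_{WN}$ is the image of a feasible point $x^k\in\mathcal{S}$, hence $f(x^k)\in\mathcal{Y}\subseteq\mathcal{Y}^+$; moreover by construction $f(x^k)\le w^k\le M$ (the feasibility constraint of $(P^1(v^k))$ together with $v^k\in[m,M]$ and $t_k\le 0$ or the box bound), so $f(x^k)\in\mathcal{Y}^+\cap(M-\R_+^p)=\mathcal{Y}^\diamond$. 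Since $\mathcal{Y}^\diamond$ is conormal and contains the finite set $\mathcal{Y}_{WN}$, it contains the conormal hull $\mathcal{N}(\mathcal{Y}_{WN})=\mathcal{L}$.

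\textbf{Part (ii).} The second inclusion $\mathcal{U}_\varepsilon\cap\mathcal{Y}^\diamond\subseteq\mathcal{Y}^\diamond_\varepsilon$ follows from (i): since $\mathcal{Y}^\diamond\subseteq\mathcal{U}$, any point $y\in\mathcal{Y}^\diamond$ which is weakly $\varepsilon$-nondominated with respect to the larger set $\mathcal{U}$ is a fortiori weakly $\varepsilon$-nondominated with respect to $\mathcal{Y}^\diamond$ (fewer points can dominate it). For the first inclusion ${\rm WMin}\mathcal{Y}^\diamond\subseteq\mathcal{U}_\varepsilon\cap\mathcal{Y}^\diamond$, I would use the termination criterion: at termination every $v\in V_\varepsilon=V^K$ satisfies $\|w_v-v\|\le\epsilon$, so by Lemma~\ref{lem:Bk_Ydiamond_space_bounded}, $d_H(\mathcal{U},\mathcal{Y}^\diamond)\le\max_{v\in V^K}\|w_v-v\|\le\epsilon$; in particular every point of $\mathcal{U}$ is within distance $\epsilon$ of $\mathcal{Y}^\diamond$ and conversely. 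The key point is then: if $\bar w\in{\rm WMin}\mathcal{Y}^\diamond$ but $\bar w\notin\mathcal{U}_\varepsilon$, there would be a point $y\in\mathcal{U}$ with $y<\bar w-\varepsilon$; since $d(y,\mathcal{Y}^\diamond)\le\epsilon$ in the sense measured along $\R_+^p$ directions (Lemma~\ref{lem:d_equal}), one produces a point of $\mathcal{Y}^\diamond$ strictly below $\bar w$, contradicting $\bar w\in{\rm WMin}\mathcal{Y}^\diamond$. The geometry of conormal sets — distances to $\mathcal{Y}^\diamond$ are realized by moving in the positive orthant direction — is what makes the $\varepsilon=\epsilon e$ choice line up with the termination bound $\|w_v-v\|\le\epsilon$.

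\textbf{Part (iii).} Here I would combine (i) and (ii): since $\mathcal{L}\subseteq\mathcal{Y}^\diamond$, a point $\bar w\in{\rm WMin}\mathcal{L}$ lies in $\mathcal{Y}^\diamond$. I must show it is weakly $\varepsilon$-nondominated in $\mathcal{Y}^\diamond$, i.e. there is no $y\in\mathcal{Y}^\diamond$ with $y<\bar w-\varepsilon$. The argument is that $\mathcal{L}=\mathcal{N}(\mathcal{Y}_{WN})$ and $\mathcal{U}=\mathcal{P}^K$ sandwich $\mathcal{Y}^\diamond$ with $d_H(\mathcal{L},\mathcal{U})\le\epsilon$ as well — this uses that each $w^k=v^k+t_k\hat d$ was generated from a vertex $v^k$ that either entered $V_\varepsilon$ or was cut, so the vertices $f(x^k)$ of $\mathcal{L}$ and the remaining vertices of $V_\varepsilon$ differ by at most $\epsilon$ in the relevant direction. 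Then any $y\in\mathcal{Y}^\diamond\subseteq\mathcal{U}$ is within $\epsilon$ of $\mathcal{L}$, so $y<\bar w-\varepsilon$ would force a point of $\mathcal{L}$ strictly below $\bar w$, contradicting $\bar w\in{\rm WMin}\mathcal{L}$.

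\textbf{Main obstacle.} The routine set-inclusion parts (the second halves of (i) and (ii)) are easy; the real work is in the first inclusion of (ii), namely converting the termination bound $\max_{v\in V^K}\|w_v-v\|\le\epsilon$ — a Euclidean-norm statement — into the coordinatewise $\varepsilon=\epsilon e$ domination statement defining $\mathcal{U}_\varepsilon$, and in verifying that the projection/distance geometry of Lemma~\ref{lem:d_equal} (distances to conormal sets realized along $\R_+^p$) is strong enough to guarantee that no point of $\mathcal{U}$ can $\varepsilon$-dominate a weakly nondominated point of $\mathcal{Y}^\diamond$. One subtlety to handle carefully is the direction $\hat d$: the bound $\|w_v-v\|=t_v\|\hat d\|$ mixes the chosen direction with the tolerance, so I would either specialize to $\hat d=e$ (as the algorithm suggests) or carry the factor $\|\hat d\|$ through and adjust constants. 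I would also need to be careful that $\mathcal{U}_\varepsilon$ is taken relative to $\mathcal{U}$ (not relative to $\mathcal{Y}^\diamond$) when invoking the definition $Q_\varepsilon=\{q^0\in Q\mid (q^0-\varepsilon-{\rm int}\R_+^p)\cap Q=\emptyset\}$.
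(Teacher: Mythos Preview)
Your approach matches the paper's: it too declares (i) ``straightforward,'' proves the first inclusion of (ii) via the termination bound $\mathcal{U}\subseteq\mathcal{Y}^\diamond+\epsilon U_p$ and a contradiction, gets the second inclusion of (ii) from $\mathcal{Y}^\diamond\subseteq\mathcal{U}$, and says (iii) is ``similar.'' The obstacle you flag---converting the Euclidean bound into the coordinatewise $\varepsilon=\epsilon e$ condition---dissolves in one line: since $\|u\|\le 1$ forces $u_i\ge -1$ for each $i$, the closed unit ball satisfies $\epsilon U_p\subseteq -\epsilon e+\R_+^p$, hence $\mathcal{U}\subseteq\mathcal{Y}^\diamond+\epsilon U_p\subseteq\mathcal{Y}^\diamond-\epsilon e+\R_+^p$, and any $z\in\mathcal{U}$ with $z<y-\epsilon e$ then yields a point of $\mathcal{Y}^\diamond$ strictly below $y$, contradicting $y\in{\rm WMin}\mathcal{Y}^\diamond$. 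One minor slip in your part (i): $w^k\le M$ need not hold (cf.\ Lemma~\ref{lem:d'}), but you do not need it---$f(x^k)\in\mathcal{Y}\subseteq[m,M]$ already gives $f(x^k)\le M$.
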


\begin{proof}i) is straightforward.

We now prove ii). When the algorithm terminates at the $K^{th}$ iteration,
with a tolerance level $\epsilon$, $\mathcal{U}\equiv\mathcal{P}^{K}$
is close enough to $\mathcal{Y}^{\diamond}$, namely 
\[
\mathcal{U}\subseteq\mathcal{Y}^{\diamond}+\epsilon U_{p},
\]
where $U_{p}$ is the closed unit ball in $\R^{p}$. Let $y\in{\rm WMin}\mathcal{Y}^{\diamond}$,
it is necessary to prove $y\in\mathcal{U}_{\varepsilon}$. For this
purpose, by definition of $\mathcal{U}_{\varepsilon}$, we need to
show that 
\begin{equation}
[(y-\epsilon e)-\mbox{int}\Bbb R_{+}^{p}]\cap\mathcal{U}=\emptyset.\label{eq:th_yin_yout_1}
\end{equation}
Indeed, since $\mathcal{U}\subseteq\mathcal{Y}^{\diamond}+\epsilon U_{p}\subseteq\mathcal{Y}^{\diamond}-\epsilon e+\Bbb R_{+}^{p}$,
it is sufficient to see that the two sets in the left hand side of
\eqref{eq:th_yin_yout_1} do not intersect. Moreover, if $y\in\mathcal{U}_{\varepsilon}\cap\mathcal{Y}^{\diamond}$,
obviously, \eqref{eq:th_yin_yout_1} is now true. Because $\mathcal{Y}^{\diamond}\subseteq\mathcal{P}^{K}$,
so $[(y-\epsilon e)-\text{int}\Bbb R_{+}^{p}]\cap\mathcal{Y}^{\diamond}=\emptyset$.
We conclude that $y\in\mathcal{Y}_{\varepsilon}^{\diamond}$.

The proof for iii) is similar.\end{proof}

\begin{theorem}\label{thm_ES} Given $\epsilon>0$ and $\varepsilon=\epsilon e$,
the algorithm terminates after a finite number of iterations and generates
the approximate solution set $ES$ such that $\mathcal{S}_{WE}\subseteq ES\subseteq\mathcal{S}_{\varepsilon}$.

\end{theorem}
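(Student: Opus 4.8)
The plan is to split the statement into three parts: (a) finite termination, (b) the inclusion $\mathcal{S}_{WE}\subseteq ES$, and (c) the inclusion $ES\subseteq\mathcal{S}_{\varepsilon}$. For finite termination I would argue by contradiction. If the algorithm never stops, it generates an infinite nested sequence $\{\mathcal{P}^k\}$ with $\max_{v\in V^k}\|w_v-v\|>\epsilon$ for every $k$, since a vertex $v^k$ is moved into $V_\varepsilon$ exactly when $\|w^k-v^k\|\le\epsilon$. But Lemma~\ref{lem:lim_v_wv} (applied with the volume-shrinking estimate, which does not actually require $\varepsilon=0$ for the series bound) shows $\sum_i (t_i)^p\,\mathrm{Vol}[0,\hat d]\le\mathrm{Vol}\,\hat{\mathcal{P}}^0<\infty$, hence $t_k\to 0$ and $\|w^k-v^k\|=t_k\|\hat d\|\to 0$. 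This contradicts $\|w^k-v^k\|>\epsilon$ for all $k$, so the algorithm must halt after finitely many iterations, say at iteration $K$.

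For part (c), $ES\subseteq\mathcal{S}_\varepsilon$: take $x\in ES$, so there is $y\in\mathcal{U}_\varepsilon\cap\mathcal{Y}^\diamond$ with $f(x)\le y$. By Theorem~\ref{theorem:yin_yout}(ii), $\mathcal{U}_\varepsilon\cap\mathcal{Y}^\diamond\subseteq\mathcal{Y}_\varepsilon^\diamond$, so $y\in\mathcal{Y}_\varepsilon^\diamond$, i.e.\ $(y-\varepsilon-\mathrm{int}\,\R_+^p)\cap\mathcal{Y}=\emptyset$ (using that weakly $\varepsilon$-nondominated points of $\mathcal{Y}^\diamond$ restrict to those of $\mathcal{Y}$ via Proposition~\ref{lem_MinY}). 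Since $f(x)\le y$, we get $f(x)-\varepsilon-\mathrm{int}\,\R_+^p\subseteq y-\varepsilon-\mathrm{int}\,\R_+^p$, so no $x'\in\mathcal{S}$ satisfies $f(x)-\varepsilon>f(x')$; that is exactly the definition of $x\in\mathcal{S}_\varepsilon$.

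For part (b), $\mathcal{S}_{WE}\subseteq ES$: let $x\in\mathcal{S}_{WE}$ and set $y=f(x)\in\mathrm{WMin}\,\mathcal{Y}\subseteq\mathrm{WMin}\,\mathcal{Y}^\diamond$ by Proposition~\ref{lem_MinY}(ii); in particular $y\in\mathcal{Y}\subseteq\mathcal{Y}^\diamond$. By Theorem~\ref{theorem:yin_yout}(ii), $\mathrm{WMin}\,\mathcal{Y}^\diamond\subseteq\mathcal{U}_\varepsilon\cap\mathcal{Y}^\diamond$, so $y\in\mathcal{U}_\varepsilon\cap\mathcal{Y}^\diamond$. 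Since $f(x)\le y$ trivially (equality), $x$ lies in the set $\{x'\in\mathcal{S}\mid f(x')\le y\}$, which is one of the sets in the union defining $ES$; hence $x\in ES$.

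The main obstacle is the finite-termination argument: one must be careful that the volume-decrease inequality $\mathrm{Vol}\,\hat{\mathcal{P}}^k-\mathrm{Vol}\,\hat{\mathcal{P}}^{k+1}\ge(t_k)^p\,\mathrm{Vol}[0,\hat d]$ survives the removal of improper vertices by Procedure~\emph{RIV} — i.e.\ that pruning only shrinks the copolyblock further, so the telescoping sum of volumes is still bounded by $\mathrm{Vol}\,\hat{\mathcal{P}}^0$ — and that the chosen $v^k$ realizing the maximum gap indeed produces the box $[v^k,w_{v^k}]$ that is carved off. Once that bookkeeping is in place, $t_k\to0$ forces $\|w^k-v^k\|\le\epsilon$ eventually, which is incompatible with the \textbf{while} loop running forever; parts (b) and (c) are then routine consequences of Theorem~\ref{theorem:yin_yout}.
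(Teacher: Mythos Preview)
Your proposal is correct and follows essentially the same route as the paper: finite termination via the volume--shrinking estimate behind Lemma~\ref{lem:lim_v_wv}, and the two inclusions via Theorem~\ref{theorem:yin_yout}(ii). Your part~(c) is in fact more explicit than the paper's one-line appeal to the definition of~$\mathcal{S}_\varepsilon$, and your remark about Procedure~\emph{RIV} only shrinking the copolyblock further is the right way to justify that the telescoping volume bound survives; the one inessential slip is the phrase ``the chosen $v^k$ realizing the maximum gap'' --- the algorithm picks \emph{any} $v^k\in V^k\setminus V_\varepsilon$, but your argument only needs that each cut satisfies $\|w^k-v^k\|>\epsilon$, which it does.
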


\begin{proof}

Since $\epsilon>0$, from Lemma \ref{lem:lim_v_wv}, there exists
$K>0$ such that $||w_{v}-v||\leq\epsilon$ for all $v\in V^{K}$
at which iteration the algorithm terminates. Moreover, the necessary
and sufficient condition of a weakly efficient solution $x\in\mathcal{S}_{WE}$
is that there exists some $y\in{\rm WMin}\mathcal{Y}^{\diamond}$
such that $f(x)\leq y$. Theorem \ref{theorem:yin_yout}(ii) enables
us to write 
\[
\mathcal{S}_{WE}\subseteq\underset{y\in{\rm WMin}\mathcal{Y}^{\diamond}}{\bigcup}\{x\in\mathcal{S}\mid f(x)\leq y\}\subseteq\underset{y\in\mathcal{U}_{\varepsilon}\cap\mathcal{Y}^{\diamond}}{\bigcup}\{x\in\mathcal{S}\mid f(x)\leq y\}=ES.
\]
 Moreover, from the definition of $\mathcal{S}_{\varepsilon}$, we
have $ES\subseteq\mathcal{S}_{\varepsilon}$, so $\mathcal{S}_{WE}\subseteq ES\subseteq\mathcal{S}_{\varepsilon}$.
This completes the proof.\end{proof}

\section{Computational Experiment}

This section is used to illustrate our proposed algorithm through
several numerical examples. We also want to compare our results with
other related works of \cite{Benson2005}. The algorithms were implemented
in parallel on Intel(R) Xeon(R) CPU E5-2630 v4 at 2.20Ghz (32 logical
cores) and 128Gb RAM using Matlab 9.3 (2017b).

\begin{example} \label{example:linear_fractional}Consider the following
linear fractional programming problem

\[
\begin{array}{cccl}
\mbox{Vmin} & f_{1}(x) & = & \dfrac{-x_{1}}{x_{1}+x_{2}},\\
 & f_{2}(x) & = & \dfrac{3x_{1}-2x_{2}}{x_{1}-x_{2}+3}\\
\mbox{s.t.} & x_{1},x_{2}\in\Bbb R,\\
 & x_{1}-2x_{2} & \leq & 2,\\
 & -x_{1}-2x_{2} & \leq & -2,\\
 & -x_{1}+x_{2} & \leq & 1,\\
 & x_{1} & \leq & 6.
\end{array}
\]

\end{example}

Since both objectives of this problem are a ratio of two linear functions,
it is clearly a multiobjective strictly quasiconvex programming
problem. Thus, we can use our algorithm to solve this problem. Let us demonstrate below the detailed computation for the case $\epsilon=0.5$.

\textbf{Initialization step}. We find a lower boundary $m=(-1,-1)$ by solving the convex
programming problem \ref{eq:P_i}, $i=1,2$ and choose
an upper boundary $M=(0,2.6)$.

\textbf{Iteration} $k=0$. The only point in $V^0$ is chosen $v^0=m=(-1,-1)$. Solving $(P^2(v^0))$ gives $t_0=0.6404$ and $w^0=(-0.3596,-0.3596)$. Two new proper vertices $(-0.3596,-1.000)$ and $(-1.000,-0.3596)$ are inserted to $V^1$ by the cutting procedure. 

\textbf{Iteration} $k=1$. A point $v^1=(-0.3596,-1.000)$ is chosen from $V^1\setminus V_\varepsilon$. Solving $(P^2(v^1))$ gives $t_1=0.1626$ and $w^1=(-0.1970,-0.8374)$. Since $||w^1-v^1||=0.2299<\epsilon$, $V_\epsilon = V_\epsilon \cup \{v^1\}$. We continue to the next iteration.

\textbf{Iteration} $k=2$. Choose $v^2=(-1.0000,-0.3596)$. Solving $(P^2(v^2))$ gives $t_2=0.5257,w^2=(-0.4743,0.1661)$. Vertices $(-0.4743,-0.3596)$, $(-1.0000,0.1661)$ are inserted to $V^3$ by the cutting procedure.
   
\textbf{Iteration} $k=3$. Choose $v^3=(-0.4743,-0.3596)$. Solving $(P^2(v^3))$ gives $t_3=0.1005$ and $w^3=(-0.3738,-0.2591)$. Since $||w^3-v^3||=0.1421<\epsilon$, update $V_\epsilon = V_\epsilon \cup \{v^3\}$.

\textbf{Iteration} $k=4$. Choose $v^4=(-1.0000,0.1661)$. Solving $(P^2(v^4))$ gives $t_4=0.3571,w^4=(-0.6429,0.5232)$. Vertices $(-0.6429,0.1661)$, $(-1.0000,0.5232)$ are inserted to $V^5$ by the cutting procedure.

\textbf{Iteration} $k=5$. Choose $v^5=(-0.6429,0.1661)$.  Solving $(P^2(v^5))$ gives $t_5=0.1154,w^5=(-0.5275,0.2815)$. Since $||w^5-v^5||=0.1633<\epsilon$, update $V_\epsilon = V_\epsilon \cup \{v^5\}$.

\textbf{Iteration} $k=6$. Choose $v^6=(-1.0000,0.5232)$.  Solving $(P^2(v^6))$ gives $t_5=0.2377,w^5=(-0.7623,0.7608)$. Since $||w^6-v^6||=0.3361<\epsilon$, update $V_\epsilon = V_\epsilon \cup \{v^6\}$.

The algorithm terminates since there is no more points in $V^k\setminus V_\varepsilon$.  We obtain the sets
\begin{align*}
\mathcal{Y}_{WN} & = \{(-0.3596,-0.3596),(-0.1970,-0.8374),(-0.4743,0.1661),\\&(-0.3738,-0.2591),(-0.6429,0.5232),(-0.5275,0.2815),(-0.7623,0.7608)\}
\end{align*}
and
\[
V_\epsilon = \{(-0.3596,-1.0000),(-0.4743,-0.3596),(-0.6429,0.1661),(-1.0000,0.5232)\}.
\]

We compute the results in other
cases of $\epsilon$ and show them in Table \ref{table:example_1},
where T, V and C denote the average computation time, number of weakly
efficient values (or the number of vertices in $\mathcal{L}$) and
number of vertices of the outer approximate copolyblock, respectively.
The computational result is visualized in Figure \ref{fig:example_1_1}.

\begin{table}[h]
\begin{centering}
\begin{tabular}{lccc}
\hline 
\noalign{\smallskip}
$\epsilon$ & T & V & C\tabularnewline
\noalign{\smallskip}
\hline 
\hline 
0.1 & 1.894534 & 35 & 18\tabularnewline
\noalign{\smallskip}
\hline 
0.05 & 2.167836 & 67 & 34\tabularnewline
\noalign{\smallskip}
\hline 
0.025 & 2.644173 & 117 & 59\tabularnewline
\noalign{\smallskip}
\hline 
0.0125 & 4.124381 & 231 & 116\tabularnewline
\noalign{\smallskip}
\hline 
\noalign{\smallskip}
\end{tabular}
\par\end{centering}
\caption{\label{table:example_1}Computational results in Example \ref{example:linear_fractional}}
\end{table}

  \begin{figure}[h]
  \begin{centering}
  \includegraphics[scale=0.16]{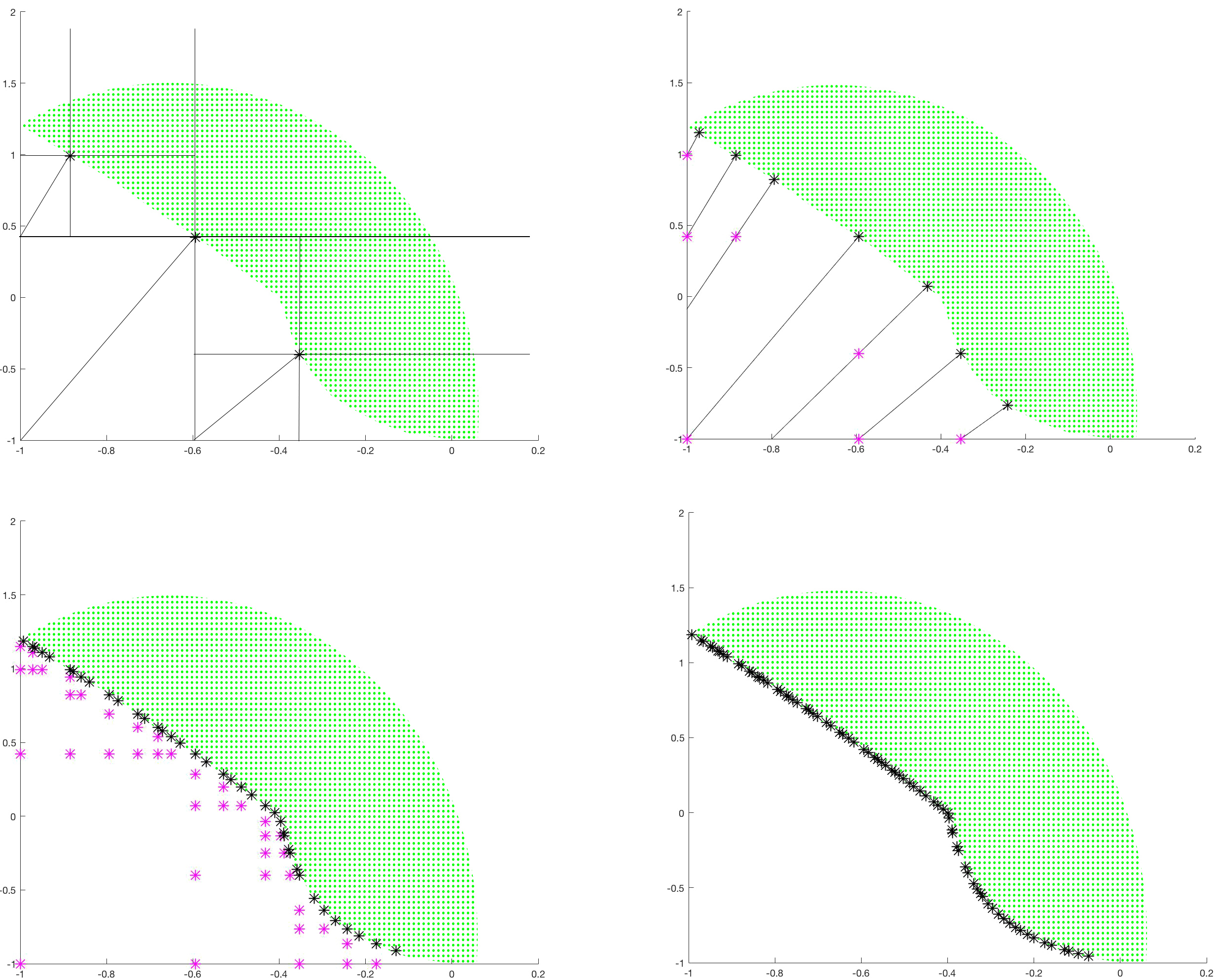}
  \par\end{centering}
  \noindent \centering{}\caption{\label{fig:example_1_1}The outer approximation $\mathcal{U}$ of
  $\mathcal{Y}^{\diamond}$ with different values of $\epsilon\in\{1,0.5,0.1,0.05\}$
  in Example \ref{example:linear_fractional}. The blue dots denote
  vertices of set $V^{k}$, while the red pluses represent the weakly
  nondominated points in the outcome space.}
  \end{figure}

\begin{example}\label{example:convex_fractional}Consider the following
convex fractional minimization problem

\[
\begin{array}{cccc}
\mbox{Vmin} & f_{1}(x) & = & \dfrac{x_{1}+1}{-x_{1}^{2}+3x_{1}-x_{2}^{2}+3x_{2}+3.50},\\
 & f_{2}(x) & = & \dfrac{x_{1}^{2}-2x_{1}+x_{2}^{2}-8x_{2}+20.00}{x_{2}}\\
\mbox{s.t.} & x_{1},x_{2}\in\Bbb R,\\
 & 2x_{1}+x_{2} & \leq & 6,\\
 & 3x_{1}+x_{2} & \leq & 8,\\
 & x_{1}-x_{2} & \leq & 1,\\
 & x_{1},x_{2} & \geq & 1.
\end{array}
\]
\end{example}

Since both $f_{1}(x)$ and $f_{2}(x)$ are convex fractions and the
feasible domain of this problem is a polyhedron, the above problem
is a strictly quasiconvex multiobjective programming problem.

By choosing $\epsilon=0.1,\hat{d}=(1,1)$, the algorithm stops after
19 iterations. We obtain the $V_{\varepsilon}$ set of 10 vertices
of the approximate copolyblock and the $\mathcal{Y}_{WN}$ set including
19 weakly nondominated points. The computational results with other
values of $\epsilon$ are presented in Table \ref{table:example_2_1}
with all notations having the same meaning as in Table \ref{table:example_1}.
The computational results are illustrated in Figure \ref{fig:example_2_1}.
Two green small circles in the figure are the nondominated outcome
points as stated in \cite{Benson2005} after running its algorithm
twice with different initial conditions.

\begin{table}[h]
\begin{centering}
\begin{tabular}{lccc}
\hline 
\noalign{\smallskip}
$\epsilon$ & T & V & C\tabularnewline
\noalign{\smallskip}
\hline 
\hline 
0.08 & 1.764705 & 25 & 13\tabularnewline
\noalign{\smallskip}
\hline 
0.04 & 2.156274 & 70 & 35\tabularnewline
\noalign{\smallskip}
\hline 
0.02 & 2.628011 & 110 & 55\tabularnewline
\noalign{\smallskip}
\hline 
0.01 & 3.529236 & 170 & 84\tabularnewline
\noalign{\smallskip}
\hline 
\smallskip
\end{tabular}
\par\end{centering}
\caption{\label{table:example_2_1}Computational results in Example \ref{example:convex_fractional} }
\end{table}

  \begin{figure}[h]
  \includegraphics[scale=0.5]{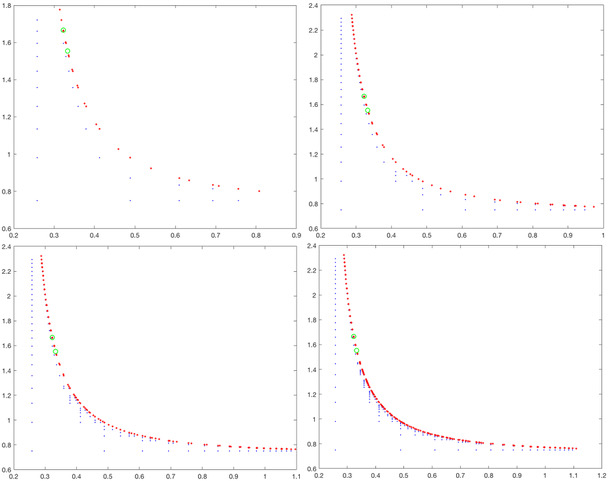}
  \centering{}\caption{\label{fig:example_2_1}The image of outer approximation $\mathcal{U}$
  of $\mathcal{Y}^{\diamond}$ with several values of $\epsilon=0.1,0.05,0.02,0.01$
  in Example \ref{example:convex_fractional}. The blue dots denote
  vertices of set $V^{k}$, while the red pluses represent the weakly
  nondominated points in the outcome space. Two green small circles
  are the nondominated outcome points as calculated by \cite{Benson2005}.}
  \end{figure}

\begin{example}\label{example:convex_3d}Consider the following convex
programming problem

\[
\begin{array}{cccl}
\mbox{Vmin} & f_{1}(x) & = & x_{1}^{2}+x_{2}^{2}+x_{3}^{2}+10x_{2}-120x_{3},\\
 & f_{2}(x) & = & x_{1}^{2}+x_{2}^{2}+x_{3}^{2}+80x_{1}-448x_{2}+80x_{3},\\
 & f_{3}(x) & = & x_{1}^{2}+x_{2}^{2}+x_{3}^{2}+448x_{1}+80x_{2}+80x_{3}
\end{array}
\]

\[
\begin{array}{crcl}
\mbox{s.t.} & x_{1}^{2}+x_{2}^{2}+x_{3}^{2} & \leq & 100,\\
 & 0 & \leq & x_{1}\leq10,\\
 & 0 & \leq & x_{2}\leq10,\\
 & 0 & \leq & x_{3}\leq10.
\end{array}
\]

\end{example}

\medskip

In the initial step, we obtain $m_{1}=(-1100,-4380,-4380)$ by solving
convex programming problems $(P_{1}^{m}),(P_{2}^{m}),(P_{3}^{m})$.
Instead of solving $(P_{1}^{M}),(P_{2}^{M}),(P_{3}^{M})$, we only
need to find a upper boundary $\hat{y}=(473.2051,1685.6406,1685.6406)$.
Let $m=(-1110,-4390,-4390)<m_{1},M=(2000,2000,2000)>\hat{y}$ and
a positive direction $\hat{d}=(0.2,1,1)>0$.

\medskip

Computational results with different values of $\epsilon$ are presented
in Table \ref{table:example_3_1} with the same notation meaning as
in Example \ref{example:linear_fractional}.

\begin{table}[h]
\begin{centering}
\begin{tabular}{lccc}
\hline 
\noalign{\smallskip}
$\epsilon$ & T & V & C\tabularnewline
\noalign{\smallskip}
\hline 
\hline 
800 & 4.748950 & 606 & 1,122\tabularnewline
\noalign{\smallskip}
\hline 
400 & 29.800921 & 13,450 & 24,982\tabularnewline
\noalign{\smallskip}
\hline 
200 & 4,766.430489 & 1,011,221 & 1,907,464\tabularnewline
\noalign{\smallskip}
\hline 
\noalign{\smallskip}
\end{tabular}
\par\end{centering}
\caption{\label{table:example_3_1}Computational results in Example \ref{example:convex_3d} }

\end{table}

  \begin{figure}[h]
  \begin{centering}
  \includegraphics[scale=0.5]{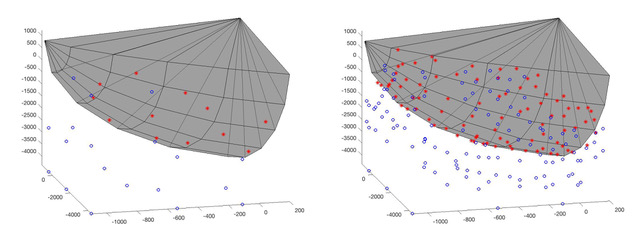}
  \par\end{centering}
  \caption{\label{fig:example_3_1}The distribution of the outer approximation
  $\mathcal{U}$ of $\mathcal{Y}^{\diamond}$ with $\epsilon=2400$
  and $400$. The blue circles denote vertices of set $V^{k}$, while
  the red asterisks represent the weakly nondominated points in the
  outcome space.}
  
  \end{figure}

\section{Conclusion}

In this paper, we propose an algorithm for solving the strictly quasiconvex
multiobjective programming problem \ref{QVP} based on the monotonic
approach. Firstly, we generate a weakly efficient solution of \ref{QVP}
associated with a nondominated outcome point by using strictly quasiconvex
programming. Then, we apply the cutting cones in monotonic optimization
to outer approximate the outcome set. From the sets of outer approximation
outcome points and nondominated outcome points, we have established
the inner and outer approximation set of outcome set and obtained
the approximation of weakly solution set that contains the whole weakly
solution set of \ref{QVP} with any tolerance. These properties are
guaranteed by the convergence theorems. We also develop a parallel
version of the proposed algorithm, that is more efficient than the
former. The numerical results show that the algorithm is flexible
for a large class of problems and the computational time is acceptable
for a feasible tolerance. In the future, the proposed algorithm can
be applied for solving many problems related to the multiobjective
programming problem \ref{QVP}.

\medskip
Received xxxx 20xx; revised xxxx 20xx.
\medskip

\end{document}